\documentclass[preprint,12pt,authoryear]{elsarticle}

\usepackage{amssymb}
\usepackage{amsmath}

\usepackage{amsthm}

\newtheorem{theorem}{Theorem}
\newtheorem{lemma}{Lemma}
\newtheorem{proposition}{Proposition}

\usepackage{amsthm}
\usepackage{algorithm}
\usepackage{algpseudocode}
\usepackage{microtype}

\biboptions{authoryear}   
\journal{Nuclear Physics B}

\begin{document}

\begin{frontmatter}



\title{Explainable Artificial Intelligence for Financial Integral Equations: A Fixed-Point Neural Operator Approach} 


\author[1]{Sanjay Kumar Mohanty\corref{cor1}}
\ead{sanjaymath@gmail.com}
\cortext[cor1]{Corresponding author}
\affiliation{ organization={Department of Mathematics, SAS, Vellore Institute of Technology, Vellore 632014, India},
            }

\begin{abstract}
The explainable artificial intelligence is used to analyze the stochastic Fredholm integral equations (SFIEs) and stochastic
deep neural networks (SDNNs). The neural operator-based stochastic fixed point framework is used to develop SDNNs. The solution of an SFIE is obtained
through successive applications of an integral operator, and this iterative structure naturally
resembles the layered architecture of a neural network. The associated nonlinear versions of SFIE and SDNN are discussed. The SFIE and SDNN are used to solve the Black Scholes equation, contagion dynamics of financial networks, and the Merten jump diffusion equation. It is observed that the results obtained through SFIE and SDNN for all the applications agree well. 
\end{abstract}



\begin{keyword}
Stochastic Deep Neural Network, Stochastic Fredholm Neural Network, Black-Scholes equation, Fredholm Integral Equation, Neural Operator
\end{keyword}

\end{frontmatter}

\section{Introduction}
Stochastic Fredholm integral equations arise as a natural extension of classical integral equation theory when randomness is incorporated into the kernel, forcing terms, or solution process, making them suitable for modeling systems with nonlocal interactions under uncertainty. Building on the foundational work of Erik Ivar Fredholm \cite{fredholm1903}, early studies focused on deterministic formulations, which were later generalized using stochastic analysis inspired by Kiyosi Itô \cite{ito1944}. Over time, researchers developed theoretical results on existence, uniqueness, and stability, alongside analytical and numerical methods for solving such equations. Classical contributions in the 1970s and 1980s addressed stochastic integral equations in general, while more specialized work on stochastic Fredholm and Volterra–Fredholm types emerged in the late 20th and early 21st centuries. In recent years, the field has seen significant progress with advanced computational techniques, including wavelet-based and operational matrix methods, as well as modern machine learning approaches. For instance, recent studies have proposed efficient numerical schemes for stochastic Volterra–Fredholm equations with delay and improved convergence properties \cite{yao2024} , while deep learning-based frameworks such as multi-grade neural networks have been introduced to solve highly oscillatory Fredholm equations with strong accuracy and stability guarantees. These developments highlight a clear transition from classical analytical approaches to data-driven and computationally efficient methods, reflecting the growing importance of stochastic Fredholm integral equations in modeling complex systems across science and engineering \cite{jiang2026}.

Stochastic Differential Equations (SDEs) offer a natural and powerful way to describe systems that evolve under uncertainty, extending classical differential equations by incorporating randomness through processes such as the Wiener process. The origins of this field can be traced to the physical insights of Paul Langevin \cite{langevin1908}, who modeled random motion in fluids, and were later placed on firm mathematical footing by Kiyosi Itô \cite{ito1944} through the development of Itô calculus, which made it possible to rigorously analyze stochastic systems. The formal construction of Brownian motion by Norbert Wiener \cite{wiener1923} further strengthened the theoretical backbone of SDEs. Over the years, these equations have moved beyond theory to become essential tools in many applied fields, most notably in finance, where the work of Fischer Black and Myron Scholes \cite{blackscholes1973} demonstrated how asset prices could be modeled under uncertainty. Recently, SDEs continue to evolve, with ongoing research focusing on efficient numerical methods and new applications in complex, data-driven environments, reflecting their central role in understanding real-world systems influenced by randomness \cite{higham2001}.

Explainable Artificial Intelligence (XAI) based neural operator approach has recently become central in financial mathematics, enabling transparency in complex models used for credit risk, fraud detection, and financial distress prediction while maintaining high predictive performance. Modern approaches integrate interpretability techniques such as SHAP and LIME into machine learning frameworks to provide both local and global explanations, improving trust, fairness, and regulatory compliance in financial decision-making. These developments reflect a shift toward combining accuracy with accountability, making XAI an essential component of robust and interpretable financial modeling. Kyriakos \cite{kyriakos} discussed the family of explainable machine learning techniques and presented the Fredholm neural networks (Fredholm NNs): deep neural networks (DNNs) architectures motivated by fixed point iteration schemes for the solution of linear and nonlinear FIEs of the second kind and proved that Fredholm NNs provide accurate solutions. However, there is a little progress on XAI based on neural operator. 

The modeling of financial markets under uncertainty is fundamentally captured through stochastic differential equations, with the Black–Scholes model providing a cornerstone for option pricing by assuming that asset prices follow a continuous geometric Brownian motion \cite{blackscholes1973}. While this model is analytically elegant and highly influential, its assumption of continuous price paths and constant volatility limits its ability to capture abrupt market movements. To overcome these limitations, Robert C. Merton\cite{merton1976option} extended the framework through the Merton jump diffusion model, incorporating both continuous diffusion and discrete jumps via a Poisson process to better reflect real market behavior. Another application, contagion in financial networks refers to the propagation of distress or default from one institution to others through interconnected liabilities, a phenomenon systematically modeled by the Eisenberg–Noe model, which determines clearing payments under mutual obligations. This framework, developed by Larry Eisenberg and Thomas H. Noe\cite{eisenberg2001systemic}, characterizes equilibrium payments using fixed-point theory and ensures existence and uniqueness under mild conditions. Solution methodologies typically involve iterative algorithms to compute the clearing vector, along with extensions incorporating stochastic shocks, fire sales, and network dynamics. 

The application of explainable artificial intelligence-based neural operator approach is used to discuss the SFIEs and SDNNs. First, the fundamentals of stochastic integrals and stochastic differential equations are reviewed to establish the necessary background. The subsequent section introduces both linear and nonlinear stochastic Fredholm integral equations, along with their corresponding neural operator–based explainable artificial intelligence (XAI) frameworks. The construction of these models and their convergence properties are analyzed rigorously from a theoretical perspective. Finally, the last section discusses three significant financial applications like Black-Scholes's model, financial contagion dynamics and Merten jump diffusion model that demonstrate the practical relevance of the proposed approach.
\section{Methedology}
The objective is to apply explainable artificial intelligence (ExAI) to understand SDNNs using SFIE. Some of the theoretical foundations presented below have already appeared in the literature that investigates the relationship between stochastic integral equations (SIEs) and SDNNs. Nevertheless, for the sake of completeness and to maintain a self-contained exposition, we briefly review the essential preliminaries concerning the existence and properties of solutions to SFIEs.\\
Consider stochastic Fredholm integral equations of the second kind\cite{DaPratoZabczyk1992} defined on a complete filtered probability space 
$(\Omega, \mathcal{F}, \{\mathcal{F}_t\}_{t\ge0}, \mathbb{P})$, of the form:

\begin{equation}
	Y(t) = g(t) + \int_D K(t,s) Y(s)\, ds 
	+ \int_D G(t,s) Y(s)\, dW_s, \label{eq:1}
\end{equation}
\noindent where $W_s$ is a standard Brownian motion, $K,G : D\times D \to \mathbb{R}$ are measurable kernels,
$g : D \to \mathbb{R}$ is a deterministic function.
We consider the Hilbert space $
\mathcal{H} := L^2(\Omega; L^2(D)),
$ endowed with the norm

\[
\|Y\|_{\mathcal{H}} 
=
\left(
\mathbb{E}\left[\int_D |Y(t)|^2 dt\right]
\right)^{1/2}.
\]
Here, $\mathcal{H}$ is complete and suitable for studying mean-square solutions of \eqref{eq:1}.
Next, we define the stochastic Fredholm integral operator $\mathcal{S} : \mathcal{H} \to \mathcal{H}$ by ($\mathcal{S}Y)(t)=Y(t)$ and has the mean-square contraction properties
\begin{equation}
	\|\mathcal{S}Y_1 - \mathcal{S}Y_2\|_{\mathcal{H}}
	\le q \|Y_1 - Y_2\|_{\mathcal{H}},\;\;\;\; \text{for some,}\;\; 0 < q < 1.  
	\label{eq:2}
\end{equation}
\noindent If $q = 1$, we say that the operator is mean-square non-expansive. Also, we define the deterministic and stochastic integral operators:
\begin{align}
	(\mathcal{I}_K Y)(t) &= \int_D K(t,s) Y(s) ds, \label{eq:3} \\
	(\mathcal{I}_G Y)(t) &= \int_D G(t,s) Y(s) dW_s. \label{eq:4}
\end{align}
Thus,
\[
\mathcal{S}Y = g + \mathcal{I}_K Y + \mathcal{I}_G Y.
\]
Here, the mean square contraction of $\mathcal{S}$ is proved using Young's inequality on deterministic term, Itô isometry on the stochastic integral term and suitable bound of $K$ and $G$.  For a mean-square contraction $\mathcal{S}$, \eqref{eq:1} admits a unique solution in $\mathcal{H}$ 
via the stochastic Banach fixed-point theorem. In particular, the stochastic Picard iteration:
\begin{equation}
	Y_{n+1} = \mathcal{S} Y_n,
	\label{eq:5}
\end{equation}
converges in mean-square to the unique fixed point solution $Y^\ast$ of \eqref{eq:1}. \\
\begin{proposition}\cite{Protter2005}
Consider the stochastic Fredholm integral equation (\eqref{eq:1}) defined by a 
mean-square non-expansive operator $\mathcal{S}$ on $\mathcal{H}$. 
Furthermore, consider a sequence $\{\kappa_n\}$, $\kappa_n \in (0,1]$, 
such that $
\sum_{n=0}^{\infty} \kappa_n (1-\kappa_n) = \infty.$ Then the iterative scheme

\begin{equation}
	Y_{n+1}(t)
	=
	Y_n(t)
	+
	\kappa_n \big( \mathcal{S}Y_n(t) - Y_n(t) \big)
	=
	(1-\kappa_n) Y_n(t)
	+
	\kappa_n \mathcal{S}Y_n(t),
	\label{eq:7}
\end{equation}
\noindent with $Y_0(t) = g(t)$, converges in mean-square to the fixed point solution 
$Y^\ast(t)$ of the stochastic Fredholm integral equation.	
\end{proposition}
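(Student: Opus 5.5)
This is the Krasnosel'skii--Mann iteration in the Hilbert space $\mathcal{H}=L^2(\Omega;L^2(D))$. The plan is to run the standard Hilbert-space argument for non-expansive maps, with the fixed point $Y^\ast$ of \eqref{eq:1} assumed to exist; existence follows, for instance, from the mean-square contraction discussion preceding the proposition, or we simply take $\mathrm{Fix}(\mathcal{S})\neq\emptyset$ as a hypothesis. All the geometry needed is the parallelogram-type identity valid in any Hilbert space:
\begin{equation*}
\|(1-\kappa)a+\kappa b\|_{\mathcal{H}}^2=(1-\kappa)\|a\|_{\mathcal{H}}^2+\kappa\|b\|_{\mathcal{H}}^2-\kappa(1-\kappa)\|a-b\|_{\mathcal{H}}^2 .
\end{equation*}
Here $\|\cdot\|_{\mathcal{H}}$ is the mean-square norm, so no pathwise arguments are needed.

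\textbf{Step 1 (Fej\'er monotonicity).} Apply the identity with $a=Y_n-Y^\ast$ and $b=\mathcal{S}Y_n-\mathcal{S}Y^\ast$, and use $\|b\|\le\|a\|$ from non-expansiveness \eqref{eq:2} with $q=1$. This gives
\begin{equation*}
\|Y_{n+1}-Y^\ast\|_{\mathcal{H}}^2\le \|Y_n-Y^\ast\|_{\mathcal{H}}^2-\kappa_n(1-\kappa_n)\|Y_n-\mathcal{S}Y_n\|_{\mathcal{H}}^2 .
\end{equation*}
Two consequences follow. The sequence $\|Y_n-Y^\ast\|$ is nonincreasing, and summing the inequality gives $\sum_n\kappa_n(1-\kappa_n)\|Y_n-\mathcal{S}Y_n\|^2\le\|g-Y^\ast\|^2<\infty$.

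\textbf{Step 2 (asymptotic regularity).} Show that the residual $r_n=\|Y_n-\mathcal{S}Y_n\|$ is nonincreasing. Write
\begin{equation*}
Y_{n+1}-\mathcal{S}Y_{n+1}=(1-\kappa_n)(Y_n-\mathcal{S}Y_n)+(\mathcal{S}Y_n-\mathcal{S}Y_{n+1}),
\end{equation*}
and note $\|\mathcal{S}Y_n-\mathcal{S}Y_{n+1}\|\le\|Y_n-Y_{n+1}\|=\kappa_n r_n$. The triangle inequality then yields $r_{n+1}\le r_n$. Combined with the divergence $\sum\kappa_n(1-\kappa_n)=\infty$ and the summability from Step 1, this forces $\liminf r_n=0$, and monotonicity upgrades this to $r_n\to0$.

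\textbf{Step 3 (identifying the limit).} This is the main obstacle. In infinite-dimensional $\mathcal{H}$ the standard argument only gives \emph{weak} convergence. The sequence is bounded, so some subsequence converges weakly to some $\bar Y$. Opial's demiclosedness principle applied to $I-\mathcal{S}$, together with $r_n\to0$, gives $\bar Y\in\mathrm{Fix}(\mathcal{S})$. Fej\'er monotonicity with respect to every fixed point then shows the whole sequence converges weakly to $\bar Y$. Uniqueness of the fixed point gives $\bar Y=Y^\ast$; I would justify uniqueness via the contraction structure of the specific operator $g+\mathcal{I}_K+\mathcal{I}_G$ under the kernel bounds.

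To obtain mean-square (strong) convergence as claimed, I would first try to exploit compactness of $\mathcal{I}_K+\mathcal{I}_G$. If $K,G\in L^2(D\times D)$, these operators are Hilbert--Schmidt in the deterministic part, and $\mathcal{I}_G$ can be handled by the It\^o isometry. Then $\mathcal{S}Y_{n_k}$ converges strongly along a subsequence, and since $r_n\to0$, so does $Y_{n_k}$. Fej\'er monotonicity then turns subsequential strong convergence into strong convergence of the whole sequence, because $\|Y_n-Y^\ast\|$ is monotone and has a subsequence tending to $0$. If compactness is unavailable, the fallback is to state the result as weak convergence in $\mathcal{H}$, or to note that when $\mathcal{S}$ is in fact a contraction, Step 1 already gives geometric decay.
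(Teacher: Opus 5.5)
Your Steps 1 and 2 are correct: the Hilbert-space identity, Fej\'er monotonicity, monotonicity of $r_n$, and $r_n\to 0$ from $\sum\kappa_n(1-\kappa_n)=\infty$. The Opial argument also correctly gives \emph{weak} convergence. The gap is the upgrade to mean-square (strong) convergence, which is what the statement asserts. The paper supplies no argument here, only a citation.

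Your compactness route fails on $\mathcal{H}=L^2(\Omega;L^2(D))$. A kernel that is Hilbert--Schmidt on $L^2(D)$ does not give a compact operator on $\mathcal{H}$, and the It\^o isometry is exactly what destroys compactness of $\mathcal{I}_G$:
\begin{itemize}
\item For deterministic $h$ one has $\|\mathcal{I}_G h\|_{\mathcal{H}}^2=\int_D w(s)h(s)^2\,ds$ with $w(s)=\int_D G(t,s)^2\,dt$.
\item Hence an orthonormal sequence $(h_k)$ supported in $\{w\ge c\}$, $c>0$, satisfies $\|\mathcal{I}_G(h_k-h_m)\|_{\mathcal{H}}\ge\sqrt{2c}$, while $\mathcal{I}_K h_k\to 0$.
\item So $(\mathcal{I}_K+\mathcal{I}_G)h_k$ has no convergent subsequence whenever $G\not\equiv 0$.
\item Likewise $\mathcal{I}_K$ alone acts as the identity in $\omega$: $\mathcal{I}_K(\xi_k\phi)=\xi_k\,\mathcal{I}_K\phi$ for orthonormal $\xi_k\in L^2(\Omega)$.
\end{itemize}
Your fallbacks either weaken the conclusion or replace the hypothesis by contraction. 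Also, with $q=1$, uniqueness of $Y^\ast$ cannot come from a contraction estimate. It has to be read into the hypothesis (``the fixed point'').

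Your caution is warranted for general non-expansive maps (Genel--Lindenstrauss). The missing idea is that $\mathcal{S}$ is \emph{affine}: $\mathcal{S}Y=g+LY$ with $L=\mathcal{I}_K+\mathcal{I}_G$ linear and $\|L\|\le 1$. Here is the argument.
\begin{enumerate}
\item Set $E_n:=Y_n-Y^\ast$. Then $E_{n+1}=T_nE_n$ with $T_n=(1-\kappa_n)I+\kappa_nL$, $\|T_n\|\le 1$, and each $T_n$ commutes with $I-L$.
\item For a Hilbert-space contraction, $Lx=x$ iff $L^\ast x=x$; expand $\|L^\ast x-x\|^2=\|L^\ast x\|^2-\|x\|^2\le 0$. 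So $\ker(I-L)=\mathrm{ran}(I-L)^\perp$, and uniqueness of the fixed point makes $\mathrm{ran}(I-L)$ dense.
\item If $E_0=(I-L)Z$, then $E_n=(I-L)Z_n$, where $Z_n=T_{n-1}\cdots T_0Z$ is the Krasnosel'skii--Mann iteration of $L$ (fixed point $0$). Hence $\|E_n\|_{\mathcal{H}}=\|Z_n-LZ_n\|_{\mathcal{H}}\to 0$ by your own Steps 1--2 applied to $L$.
\item For general $E_0=g-Y^\ast$, approximate by $E_0'\in\mathrm{ran}(I-L)$ within $\varepsilon$. Using $\|T_{n-1}\cdots T_0\|\le 1$ gives $\limsup_n\|E_n\|_{\mathcal{H}}\le\varepsilon$.
\end{enumerate}
This yields the claimed mean-square convergence without compactness or demiclosedness. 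Without uniqueness, the same argument gives convergence to $Y^\ast+P_{\ker(I-L)}(g-Y^\ast)$.
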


\subsection{Stochastic FIEs and Deep Neural Networks}
Here, we explore the relationship between SFIEs and SDNNs. Also, we will discuss the construction of SDNN, which is motivated by the stochastic fixed-point iteration framework. The main idea is that the solution of an SFIE can be obtained through successive applications of an integral operator, and this iterative structure naturally resembles the layered architecture of a neural network. In this way, we establish a connection between the mathematical theory of SFIEs and DNNs, thereby creating an explainable and interpretable SDNN-based framework for the stochastic numerical approximation of SFIE solutions.\\
\textbf{Definition} (Stochastic $M$-layer fixed point estimate).  
Consider a stochastic process $Y$ satisfying the SFIE \eqref{eq:1} and a discretized grid 
$Z = \{t_1,\dots,t_n\}$, with $t_i$ be the grid points and $n\in N$. Also, consider the discretized version of the stochastic integral operator:
\begin{align}
	(\mathcal{S}_Z Y_n)(t_i)
	&:= 
	\kappa_n g(t_i)
	+ \sum_{j=1}^{N} Y_n(t_j)
	\Big(
	K(t_i,t_j)\kappa_n \Delta t
	+ (1-\kappa_n)\mathbf{\delta}_{ij}
	\Big)
	\nonumber \\
	&\quad
	+ \sum_{j=1}^{N}
	Y_n(t_j)
	G(t_i,t_j)\kappa_n \Delta W_j,
	\label{eq:3.1}
\end{align}
where $\Delta W_j$ are Brownian increments. Also, we define the stochastic $M$-layer fixed point estimate as

\begin{equation}
	Y_M(t)
	=
	\mathcal{S}_Z
	\Big(
	\mathcal{S}_Z(
	\cdots
	\mathcal{S}_Z(g)
	)
	\Big),
	\label{eq:3.2}
\end{equation}
where the operator is applied $M$ times. Next, we propose the lemma associated with neural network realization of stochastic Krasnosel'skii--Mann iteration using (\cite{KloedenPlaten1992}, \cite{Krasnoselskii1955},  \cite{Mann1953}). \\
\begin{lemma}
Define $ X_n=(X_n(t_1),\ldots,X_n(t_N))^\top \in \mathbb{R}^N.$
Then the $M$-step stochastic Krasnosel' skii-Mann iteration associated with the discretized stochastic Fredholm equation can be written as
\begin{equation}
Y_{n+1}=WY_n+b ,
\label{eq:2.9}
\end{equation}
where $W\in\mathbb{R}^{N\times N}$ and $b\in\mathbb{R}^N$ and are given by
\begin{equation}
W_{ij}
=
\kappa K(t_i,t_j)\Delta t
+
\kappa G(t_i,t_j)\Delta W_j
+
(1-\kappa)\delta_{ij},
\qquad
b_i=\kappa g(t_i), \qquad\kappa\in(0,1).
\label{eq:2.10}
\end{equation}
Consequently, the iteration defines a depth-$M$ fully connected
feedforward architecture of width $N$, in which each layer performs
an affine transformation of the form $Y \mapsto WY+b$.
The approximation at $t\in[0,T]$ is given by
\begin{equation}
Y_M(t)
=
\sum_{j=1}^{N}
\left(
\kappa K(t,t_j)\Delta t
+
\kappa G(t,t_j)\Delta W_j
\right)Y_{M-1}(t_j)
+
\kappa g(t).
\label{eq:2.11}
\end{equation}
\end{lemma}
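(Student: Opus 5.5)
The plan is a direct algebraic verification: discretize the Krasnosel'skii--Mann scheme \eqref{eq:7} of the Proposition on the grid $Z$, and read off the matrix form. I will take a constant relaxation parameter $\kappa_n\equiv\kappa\in(0,1)$. Then $\sum\kappa(1-\kappa)=\infty$ holds automatically, so the Proposition applies at the continuous level.

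First I write the relaxed step at a node $t_i$:
\[
Y_{n+1}(t_i)=(1-\kappa)Y_n(t_i)+\kappa\,(\mathcal{S}Y_n)(t_i),\qquad (\mathcal{S}Y_n)(t_i)=g(t_i)+\int_D K(t_i,s)Y_n(s)\,ds+\int_D G(t_i,s)Y_n(s)\,dW_s .
\]
Next I replace the Lebesgue integral by the quadrature $\sum_j K(t_i,t_j)Y_n(t_j)\Delta t$. I replace the Itô integral by its left-point (Itô--Euler) sum $\sum_j G(t_i,t_j)Y_n(t_j)\Delta W_j$, following \cite{KloedenPlaten1992}. This is exactly the discretized operator $\mathcal{S}_Z$ in \eqref{eq:3.1} with $\kappa_n=\kappa$.

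I then write $(1-\kappa)Y_n(t_i)=\sum_j(1-\kappa)\delta_{ij}Y_n(t_j)$ and collect the coefficient of each $Y_n(t_j)$. This gives $Y_{n+1}(t_i)=\sum_j W_{ij}Y_n(t_j)+b_i$, with $W_{ij}$ and $b_i$ as in \eqref{eq:2.10}. Stacking over $i=1,\dots,N$ yields \eqref{eq:2.9}.

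Iterating $M$ times from $Y_0=(g(t_1),\dots,g(t_N))^\top$ reproduces \eqref{eq:3.2}. Each step is the affine map $Y\mapsto WY+b$ on $\mathbb{R}^N$, that is, a fully connected layer of width $N$ with weight matrix $W$, bias $b$ and identity activation. Hence the $M$ iterations form a depth-$M$ feedforward network. The weights are random through the increments $\Delta W_j$, but they are fixed once a Brownian path sample is drawn, and are shared across layers.

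For the output formula \eqref{eq:2.11} at an arbitrary $t\in[0,T]$, I will apply the (undiscretized-in-$t$) operator once more to the grid values $Y_{M-1}(t_j)$. This is a Nyström-type interpolation, giving $\sum_j(\kappa K(t,t_j)\Delta t+\kappa G(t,t_j)\Delta W_j)Y_{M-1}(t_j)+\kappa g(t)$.

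The main obstacle is the $(1-\kappa)\delta_{ij}$ term. At an off-grid point $t$ there is no $Y_{M-1}(t)$ available to relax with. So I must either interpret the final layer as an unrelaxed (or $\kappa$-scaled) readout, or note that at $t=t_i$ the diagonal term must be restored for the formula to agree with the grid iteration. I will state this convention explicitly, treating the last layer as a separate output layer. I will also remark that measurability and adaptedness of $Y_{M-1}(t_j)$ are preserved layer by layer, which keeps the left-point Itô sum well defined.
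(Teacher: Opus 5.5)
Your derivation of \eqref{eq:2.9}--\eqref{eq:2.10} is the paper's proof. You discretize the relaxed step with constant $\kappa$, absorb $(1-\kappa)Y_n(t_i)$ through $\delta_{ij}$, collect coefficients and stack over $i$. The paper stops there and says nothing about \eqref{eq:2.11}. Your treatment of the output formula therefore goes beyond it, and your diagnosis of the missing diagonal term is right.

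Two refinements are needed.

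First, your two readout conventions are not interchangeable. Only the $\kappa$-scaled readout reproduces \eqref{eq:2.11}, and then \eqref{eq:2.11} holds by definition rather than by derivation. That readout is also biased. A fixed point $Y^\dagger$ of the grid iteration satisfies $Y^\dagger_i=g(t_i)+\sum_j\big(K(t_i,t_j)\Delta t+G(t_i,t_j)\Delta W_j\big)Y^\dagger_j$, because the $\kappa$ cancels. So if $Y_{M-1}\to Y^\dagger$, the right-hand side of \eqref{eq:2.11} at $t=t_i$ tends to $\kappa Y^\dagger_i$, not $Y^\dagger_i$. A consistent readout has two options:
\begin{itemize}
\item drop the factors $\kappa$, which gives your unrelaxed Nystr\"om interpolant $g(t)+\sum_j\big(K(t,t_j)\Delta t+G(t,t_j)\Delta W_j\big)Y_{M-1}(t_j)$;
\item restore $(1-\kappa)Y_{M-1}(t)$ by carrying $t$ through the layers as an extra node that receives input from the grid but does not feed back.
\end{itemize}
Neither option is literally \eqref{eq:2.11}. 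You should say so rather than present them as equivalent choices.

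Second, drop the closing remark that adaptedness is preserved layer by layer; it is false. Each row of $W$ involves all the increments $\Delta W_1,\dots,\Delta W_N$. So in general $Y_1(t_j)$ already depends on $\Delta W_j$ itself and on later increments. For $n\ge 1$ the sums $\sum_j G(t_i,t_j)Y_n(t_j)\Delta W_j$ are therefore not adapted left-point sums. The lemma does not need adaptedness, since these are finite sums defined pathwise, so nothing breaks once the remark is removed.
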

\begin{proof}
Consider the stochastic Fredholm integral equation
\begin{equation}
Y(t)
=
g(t)
+
\int_0^T K(t,s)Y(s)\,ds
+
\int_0^T G(t,s)Y(s)\,dW_s.
\label{eq:2.12}
\end{equation}
Approximating the integrals using a quadrature rule yields
\begin{equation}
(\mathcal{S}Y)(t_i)
=
g(t_i)
+
\sum_{j=1}^{N}
\left(
K(t_i,t_j)\Delta t
+
G(t_i,t_j)\Delta W_j
\right)Y(t_j)
\label{eq:2.13}
\end{equation}
Using \eqref{eq:7} and \eqref{eq:2.13}, we have
\begin{equation}
Y_{n+1}(t_i)
=
(1-\kappa)Y_n(t_i)
+
\kappa g(t_i)
+
\kappa
\sum_{j=1}^{N}
\left(
K(t_i,t_j)\Delta t
+
G(t_i,t_j)\Delta W_j
\right)Y_n(t_j).
\label{eq:2.14}
\end{equation}

Rearranging terms yields

\begin{equation}
Y_{n+1}(t_i)
=
\kappa g(t_i)
+
\sum_{j=1}^{N}
\left(
K(t_i,t_j)\kappa\Delta t
+
G(t_i,t_j)\kappa\Delta W_j
+
(1-\kappa)\mathbf{1}_{\{i=j\}}
\right)Y_n(t_j).
\label{eq:2.15}
\end{equation}
Let $ Y_n=(Y_n(t_1),\dots,Y_n(t_N))^\top.$
Then the above iteration can be written in matrix form
$ Y_{n+1}=W Y_n + b,$ where the matrix $W$ and vector $b$ are defined by \eqref{eq:2.10}.
\end{proof}
Next, using the above, we will discuss the main result associated with neural network approximation of stochastic Fredholm iterates.
\begin{theorem}
Let $\mathcal{S}:\mathcal{H}\to\mathcal{H}$ denote the stochastic Fredholm operator 
defined in \eqref{eq:2}. Assume that $\mathcal{S}$ is either mean-square 
contractive or mean-square non-expansive and define
\[
\mathcal{Z}^{\ast}
:=
\{Y\in\mathcal{H} \mid Y=\mathcal{S}^{k}g,\; k\ge0\}.
\]
Then for every $Y\in\mathcal{Z}^{\ast}$ and every $\varepsilon>0$ there exists 
a stochastic feedforward neural architecture 
$\mathcal{N}(\cdot;W,B)$ with $M$ hidden layers such that
\[
\|Y-\mathcal{N}(\cdot;W,B)\|_{\mathcal{H}}
\le \varepsilon .
\]

The network parameters $W,B$ are determined by the discretized stochastic 
Fredholm iteration, where the weight matrices incorporate the deterministic 
kernel $K$, the stochastic kernel $G$, and the Brownian increments 
$\Delta W_j$. Each layer of the network implements the affine mapping 
$X\mapsto WX+b$, corresponding to one step of the stochastic 
Krasnosel'skii--Mann iteration.
\end{theorem}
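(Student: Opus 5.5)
The plan is to fix $Y=\mathcal{S}^{k}g\in\mathcal{Z}^{\ast}$, take depth $M=k$, and let the network be the $k$-fold composition of the layer map from Lemma 1. I will use the plain Picard step, i.e.\ $\kappa=1$ in \eqref{eq:2.10}, so that every layer is exactly the discretized operator $\mathcal{S}_Z$. The proof is then an error-propagation argument. It compares $\mathcal{S}^k g$ with $\mathcal{S}_Z^k g$, extended off the grid by the interpolation formula \eqref{eq:2.11}, and shows that the difference tends to $0$ in $\mathcal{H}$ as the mesh $\Delta t\to0$. Since $k$ is fixed, the contraction or non-expansion hypothesis only enters to keep the constants uniform. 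If one insists on $\kappa\in(0,1)$, the same argument works with $\mathcal{S}$ replaced by the averaged operator $(1-\kappa)I+\kappa\mathcal{S}$. That operator has the same fixed point and is still non-expansive, and in that case $Y$ is approximated through Proposition 1 together with the triangle inequality.

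\textbf{Step 1: one-step consistency.} I will show that for each fixed $Y$ in a suitable class (continuous sample paths, bounded second moments), $\|\mathcal{S}Y-\mathcal{S}_ZY\|_{\mathcal{H}}\to0$. The error splits into two parts.
\begin{itemize}
\item The deterministic Riemann-sum error $\int_D K(t,s)Y(s)\,ds-\sum_j K(t,t_j)Y(t_j)\Delta t$. I will control it by Cauchy--Schwarz and continuity, or Lipschitz continuity, of $K$ and $Y$ in mean square.
\item The Itô-sum error $\int_D G(t,s)Y(s)\,dW_s-\sum_j G(t,t_j)Y(t_j)\Delta W_j$. I will control it with the Itô isometry, which reduces it to $\mathbb{E}\int|G(t,s)Y(s)-G(t,t_{j(s)})Y(t_{j(s)})|^2ds\to0$ for left-point (adapted) evaluation.
\end{itemize}

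\textbf{Step 2: iterate.} I will write
\begin{equation*}
\mathcal{S}^kg-\mathcal{S}_Z^kg=\sum_{m=0}^{k-1}\mathcal{S}_Z^{\,k-1-m}\bigl(\mathcal{S}-\mathcal{S}_Z\bigr)\mathcal{S}^{m}g .
\end{equation*}
Each summand is bounded by $L_Z^{\,k-1-m}\|(\mathcal{S}-\mathcal{S}_Z)\mathcal{S}^mg\|_{\mathcal{H}}$, where $L_Z$ is a mean-square Lipschitz constant of $\mathcal{S}_Z$. The same Young/Itô-isometry estimates that give \eqref{eq:2} also bound $L_Z$ uniformly in the mesh, say $L_Z\le q+o(1)$. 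With finitely many terms, each tending to zero by Step 1, I can pick the mesh so that the total error is at most $\varepsilon$. This identifies $\mathcal{N}(\cdot;W,B)$ with $\mathcal{S}_Z^k g$, with weights $W$ built from $K$, $G$ and $\Delta W_j$ as in \eqref{eq:2.10}.

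\textbf{Main obstacle.} The hard part is making Step 1 rigorous for the stochastic term. The paper's setting lets $Y$ be a general element of $L^2(\Omega;L^2(D))$, and the operator $\int_D G(t,s)Y(s)\,dW_s$ is only well defined for adapted integrands. So I will first check that each iterate $\mathcal{S}^mg$ is adapted and mean-square continuous; this holds by induction from $g$ deterministic and $K,G$ continuous. I will also have to assume such regularity of the kernels explicitly. A second subtlety is that the Brownian increments $\Delta W_j$ make the weights random. The uniform bound on $L_Z$ must therefore be taken in expectation, using $\mathbb{E}|\Delta W_j|^2=\Delta t$ and the independence of $\Delta W_j$ from $Y(t_j)$, rather than pathwise.
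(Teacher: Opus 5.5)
Your route differs from the paper's. The paper approximates the fixed point $Y^\ast$: contraction gives $\|Y_M-Y^\ast\|_{\mathcal H}\le\varepsilon/2$, Lemma 1 turns the $M$ steps into affine layers, and $\|Y_M-\mathcal N\|_{\mathcal H}\to0$ is simply asserted by citing Kloeden--Platen. You instead take $M=k$ and telescope, which fits a statement about $\mathcal Z^\ast$ better and correctly shows that contraction is inessential for fixed $k$. However, the discretization step you make explicit fails as written.

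\textbf{The adaptedness induction is false.} The stochastic integral in \eqref{eq:1} runs over all of $D$, so already $\mathcal S g(t)$ contains $\int_D G(t,s)g(s)\,dW_s$, which depends on Brownian increments after time $t$. Hence $\int_D G(t,s)\,\mathcal S g(s)\,dW_s$ is not an It\^o integral. Even for causal $G$, the Fredholm term $\int_D K(t,s)\,\mathcal S g(s)\,ds$ destroys adaptedness one step later. The same happens on the grid: $\mathcal S_Z g(t_j)$ contains $G(t_j,t_j)g(t_j)\Delta W_j$, so $Y(t_j)$ is not independent of $\Delta W_j$. Consequently, for $m\ge 1$ you can use neither the It\^o isometry in Step 1 nor the identity $\mathbb E|\sum_j G(t,t_j)Z_j\Delta W_j|^2=\sum_j G(t,t_j)^2\,\mathbb E|Z_j|^2\,\Delta t$ behind your mesh-uniform bound on $L_Z$. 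H\"older bounds alone only give a factor of order $N\sqrt{\Delta t}=|D|\,\Delta t^{-1/2}\to\infty$. The effect is real, not technical. $\mathcal S_Z^2 g(t)$ contains the diagonal sum $\sum_j G(t,t_j)G(t_j,t_j)g(t_j)(\Delta W_j)^2$, which tends to $\int_D G(t,s)G(s,s)g(s)\,ds$. So the network converges to a forward (trace-corrected) version of $\mathcal S^2 g$, which differs from a Skorokhod interpretation by exactly this term. To close the gap you must first fix the meaning of the anticipating integral, or restrict to causal $K$ and $G$ (the Volterra case). Then prove consistency by a chaos-expansion argument rather than the It\^o isometry.

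\textbf{The norm is wrong for the stability step.} $\mathcal S_Z$ reads only the point values $Y(t_j)$, which are neither defined nor controlled for $Y\in L^2(\Omega;L^2(D))$. So $\mathcal S_Z$ has no Lipschitz constant with respect to $\|\cdot\|_{\mathcal H}$. The bound by $L_Z^{k-1-m}\|(\mathcal S-\mathcal S_Z)\mathcal S^m g\|_{\mathcal H}$ therefore does not follow, and $L_Z\le q+o(1)$ cannot be extracted from \eqref{eq:2}. Run both the stability and the consistency estimates in $\sup_{t\in D}(\mathbb E|Y(t)|^2)^{1/2}$, the norm used for $C_{\Delta t}$ in Proposition~\ref{prop:error}; it dominates $\|\cdot\|_{\mathcal H}$ up to a factor $|D|^{1/2}$. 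Also, since $\mathcal S_Z$ is affine, only its linear part should act on the differences in your telescoping identity.

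Finally, drop the $\kappa\in(0,1)$ remark. Proposition 1 gives convergence to $Y^\ast$, not to the finite iterate $\mathcal S^k g$, so it cannot produce $Y\in\mathcal Z^\ast$; keep $\kappa=1$.
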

\begin{proof}
Let $Y^\ast \in \mathcal H$ denote the unique mean-square fixed point of the stochastic Fredholm operator $\mathcal S$ defined in \eqref{eq:2}.
Since $\mathcal S$ is assumed to be mean-square contractive, the stochastic Picard iteration
\[
Y_{m+1} = \mathcal S(X_m), \qquad m \ge 0,
\]

converges in $\mathcal H$ to $Y^\ast$. In particular, there exists a constant $0<q<1$ such that

\begin{equation}
\|Y_m - Y^\ast\|_{\mathcal H}
\le
q^m \|X_0 - Y^\ast\|_{\mathcal H}.
\label{eq:2.16}
\end{equation}
Consequently, for any prescribed accuracy $\varepsilon>0$, we can choose $M$ sufficiently large so that
\begin{equation}
\|Y_M - Y^\ast\|_{\mathcal H} \le \varepsilon/2.
\label{eq:2.17}
\end{equation}
Next, by Lemma 3.1, each stochastic Picard iteration can be written in matrix form

\begin{equation}
Y_{m+1}=W Y_m + b,
\label{eq:2.18}
\end{equation}
where the weight matrix $W$ and bias vector $b$ are determined by the discretized stochastic kernels $K$ and $G$ and the Brownian increments $\Delta W_j$. Therefore, the sequence of $M$ iterations corresponds exactly to a feedforward neural network with $M$ hidden layers whose output on the discretized grid is the vector
\[
Y_M(Z)=(Y_M(t_1),\dots,Y_M(t_N))^\top.
\]
Next, we will evaluate the approximation at an arbitrary point $t\in D$.
Define the neural network output

\begin{equation}
	\mathcal N(t)
	=
	W_0 Y_M(Z) + \kappa g(t),
	\label{eq:2.19}
\end{equation}
where
\[
W_0 =
\big(
K(t,t_1)\kappa\Delta t + G(t,t_1)\kappa\Delta W_1,
\dots,
K(t,t_N)\kappa\Delta t + G(t,t_N)\kappa\Delta W_N
\big).
\]
It is easy to see that by construction, $\mathcal N(t)$ corresponds to evaluating the stochastic integral operator applied to the approximate solution $Y_M$.
Using the triangle inequality in $\mathcal H$,

\begin{equation}
\|Y^\ast - \mathcal N(\cdot)\|_{\mathcal H}
\le
\|Y^\ast - Y_M\|_{\mathcal H}
+
\|Y_M - \mathcal N(\cdot)\|_{\mathcal H}.
\label{eq:2.20}
\end{equation}

The first term is controlled by the convergence of the stochastic Picard iteration as shown in \eqref{eq:2}.  The second term corresponds to the discretization error of the stochastic integral evaluation. Proceeding in a similar manner as in (\cite{KloedenPlaten1992}), we have
\[
\|Y_M - \mathcal N(\cdot)\|_{\mathcal H} \to 0
\quad \text{as} \quad \Delta t \to 0.
\]

Hence, by choosing $M$ sufficiently large and the discretization sufficiently fine, we obtain

\[
\|Y^\ast - \mathcal N(\cdot)\|_{\mathcal H} \le \varepsilon.
\]
\end{proof}
Next, we will discuss the mean-square error bound for the stochastic Fredholm neural network.
\begin{proposition}
\label{prop:error}
Let $\mathcal H$ be a Hilbert space and  
$\mathcal S:\mathcal H\to\mathcal H$ be the stochastic Fredholm operator defined in \eqref{eq:1}. 
Assume that $\mathcal S$ is a mean-square $q$-contraction and $Y^\ast$ denote the unique fixed point of $\mathcal S$. Let $Y_M$ be the approximation obtained from the $M$-layer stochastic 
Fredholm neural network corresponding to the discretized stochastic 
Picard iteration. Then the Mean-square approximation error satisfies

\begin{equation}
	\|Y_M-Y^\ast\|_{\mathcal H}
	\le
	\frac{q^M}{1-q}
	\Big(
	C_{\Delta t}
	+
	\|\mathcal S g-g\|_{\mathcal H}
	\Big),
	\label{eq:2.21}
\end{equation}

where $C_{\Delta t}$ denotes the mean-square discretization error of the stochastic integral operator,

\[
C_{\Delta t}
=
\sup_{t\in D}
\left(
\mathbb E
\left|
\int_D K(t,s)g(s)\,ds
-
\sum_{j=1}^{N}K(t,t_j)g(t_j)\Delta t
\right|^2
\right)^{1/2}
\]

\[
+
\sup_{t\in D}
\left(
\mathbb E
\left|
\int_D G(t,s)g(s)\,dW_s
-
\sum_{j=1}^{N}G(t,t_j)g(t_j)\Delta W_j
\right|^2
\right)^{1/2}.
\]
Also, for any For any $\varepsilon>0$, there exists a stochastic Fredholm neural network 
with $M^\ast$ hidden layers such that $
\|Y_{M^\ast}-Y^\ast\|_{\mathcal H}\le\varepsilon,
$ provided that

\begin{equation}
	M^\ast
	\ge
	\frac{
		\ln\!\left(
		\dfrac{\varepsilon(1-q)}
		{C_{\Delta t}+\|\mathcal S g-g\|_{\mathcal H}}
		\right)
	}{\ln q}.
	\label{eq:2.23}
\end{equation}
\end{proposition}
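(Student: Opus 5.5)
The plan is to run the classical a priori estimate of the Banach fixed-point theorem for the iteration that the $M$-layer network actually computes, starting from $Y_0=g$. The only problem-specific input is a bound on the size of the first step $Y_1-Y_0$. By Lemma 1 and Theorem 1, the network output $Y_M$ is the $M$-fold application of the discretized Krasnosel'skii--Mann/Picard map $\mathcal S_Z$ to $g$, so I set $Y_{m+1}=\mathcal S_Z Y_m$ with $Y_0=g$.

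\textbf{Step 1 (geometric decay of increments).} Assuming that $\mathcal S_Z$ inherits the mean-square $q$-contraction property \eqref{eq:2}, I would show by induction that
\[
\|Y_{m+1}-Y_m\|_{\mathcal H}\le q^m\|Y_1-Y_0\|_{\mathcal H}.
\]
Justifying this inheritance is a point I return to below. A telescoping sum and the triangle inequality then give, for $k>M$,
\[
\|Y_k-Y_M\|_{\mathcal H}\le \sum_{m=M}^{k-1}q^m\|Y_1-Y_0\|_{\mathcal H}\le \frac{q^M}{1-q}\|Y_1-Y_0\|_{\mathcal H}.
\]
Since $(Y_k)$ is Cauchy in the complete space $\mathcal H$, it converges to the fixed point. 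Letting $k\to\infty$ yields
\[
\|Y_M-Y^\ast\|_{\mathcal H}\le \frac{q^M}{1-q}\|Y_1-g\|_{\mathcal H}.
\]

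\textbf{Step 2 (first step bound).} I would split the first step as
\[
\|Y_1-g\|_{\mathcal H}\le \|\mathcal S_Z g-\mathcal S g\|_{\mathcal H}+\|\mathcal S g-g\|_{\mathcal H}.
\]
The difference $\mathcal S_Zg-\mathcal Sg$ consists of exactly the two quadrature errors in the definition of $C_{\Delta t}$: the Riemann-sum error for $\int_D K(t,s)g(s)\,ds$ and the Itô-sum error for $\int_D G(t,s)g(s)\,dW_s$. I would use Minkowski's inequality in $L^2(\Omega)$ to separate them. Then I would bound $\bigl(\mathbb E\int_D|\cdot|^2dt\bigr)^{1/2}$ by the corresponding pointwise suprema, which produces $C_{\Delta t}$. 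This step picks up a factor $|D|^{1/2}$; I would either normalize $|D|=1$ (as for $D=[0,1]$) or absorb the factor into $C_{\Delta t}$. Substituting into Step 1 gives \eqref{eq:2.21}.

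\textbf{Step 3 (depth estimate).} To obtain $\|Y_{M^\ast}-Y^\ast\|_{\mathcal H}\le\varepsilon$, it suffices that
\[
\frac{q^{M}}{1-q}\bigl(C_{\Delta t}+\|\mathcal S g-g\|_{\mathcal H}\bigr)\le\varepsilon.
\]
Taking logarithms and dividing by $\ln q<0$ reverses the inequality, which gives exactly \eqref{eq:2.23}. If the right-hand side of \eqref{eq:2.23} is negative, any $M^\ast\ge0$ works.

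\textbf{Main obstacle.} The delicate point is Step 1: the contraction hypothesis is stated for $\mathcal S$, not for $\mathcal S_Z$, and the two maps have different fixed points. My first attempt would be to show that $\mathcal S_Z$ is a $q$-contraction for a fine enough grid. For this I would use Itô isometry on the $\Delta W_j$ sums together with the same kernel bounds on $K$ and $G$ that make $\mathcal S$ contractive, possibly with $q$ replaced by a slightly larger $q'<1$. I would then read $Y^\ast$ in \eqref{eq:2.21} as the fixed point of the discretized operator, which is the object the network iterates. If instead a bound against the true fixed point of $\mathcal S$ is wanted, this step would need an extra additive consistency term of order $C_{\Delta t}/(1-q)$ that does not decay in $M$, in addition to the $q^M$ term. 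I would flag this explicitly when writing the proof.
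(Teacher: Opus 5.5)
\paragraph{Your diagnosis, and what the paper's proof actually shows.}
Your diagnosis of the obstacle is correct, and it is sharper than the paper's own argument, which takes a different route. The paper does not use the a priori Banach estimate for the discretized map. Instead it runs a perturbed-contraction recursion against the true fixed point, using only the contraction of $\mathcal S$:
\[
\|Y_{m+1}-Y^\ast\|_{\mathcal H}\le q\|Y_m-Y^\ast\|_{\mathcal H}+\|\mathcal S_{\Delta t}Y_m-\mathcal S Y_m\|_{\mathcal H}.
\]
The proof then goes wrong in three places:
\begin{itemize}
\item It asserts without justification that the consistency term is at most $C_{\Delta t}+\|\mathcal S g-g\|_{\mathcal H}$ for every $m$, although $C_{\Delta t}$ measures the quadrature error only at $g$.
\item It unrolls to $q^M\|Y_0-Y^\ast\|_{\mathcal H}+\frac{1-q^M}{1-q}\bigl(C_{\Delta t}+\|\mathcal S g-g\|_{\mathcal H}\bigr)$.
\item It then replaces this by $\frac{q^M}{1-q}\bigl(C_{\Delta t}+\|\mathcal S g-g\|_{\mathcal H}\bigr)$. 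That step does not follow, because the geometric sum does not decay in $M$.
\end{itemize}
So the non-decaying term you flag is exactly what that recursion produces. In fact \eqref{eq:2.21}, with $Y^\ast=\mathcal S Y^\ast$, fails in general for a fixed grid. Already for $G\equiv0$ the iterates converge to the Nystr\"{o}m solution, which differs from $Y^\ast$, while the right-hand side tends to $0$. One correction to your remark: the extra term is controlled by the quadrature error at $Y^\ast$ (or along the iterates), not by $C_{\Delta t}$, which is defined with $g$.

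\paragraph{The gap in your Step 1.}
The genuine gap in your plan is Step 1, and the repair you propose would not work.
\begin{itemize}
\item \emph{It\^o isometry does not apply.} It cannot be used on $\sum_j G(t,t_j)Y(t_j)\Delta W_j$ along the iteration. The Fredholm sum runs over all $j$, so already $Y_1(t_j)$ contains every increment, including $\Delta W_j$ and all later ones. Consequently $\mathcal S_Z Y_1$ contains terms proportional to $G(t,t_j)G(t_j,t_j)g(t_j)\Delta W_j^2$, which have nonzero mean, plus cross terms that do not vanish.
\item \emph{$\mathcal S_Z$ is not Lipschitz in mean square.} The map $Z\mapsto\Delta W_j Z$ is unbounded on $L^2(\Omega)$: take $Z=\mathbf 1_{\{|\Delta W_j|>R\}}$ and let $R\to\infty$. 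Hence the linear part of $\mathcal S_Z$ has no finite mean-square Lipschitz constant once some $G(t_i,t_j)\neq0$, however fine the grid.
\item \emph{A scalar example.} The model $Y=g+kY+\sigma\xi Y$ with $\xi\sim N(0,\Delta t)$ has fixed point $g/(1-k-\sigma\xi)$, which is not even integrable.
\end{itemize}
So reinterpreting $Y^\ast$ as the fixed point of $\mathcal S_Z$ does not rescue the statement in the $\mathcal H$-norm.

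\paragraph{What the hypothesis does give.}
The stated hypothesis does yield the corrected form of the paper's recursion for the Picard iteration $Y_{m+1}=\mathcal S_{\Delta t}Y_m$. Using $\|g-Y^\ast\|_{\mathcal H}\le\frac{1}{1-q}\|\mathcal S g-g\|_{\mathcal H}$, one gets
\[
\|Y_M-Y^\ast\|_{\mathcal H}\le\frac{q^M}{1-q}\|\mathcal S g-g\|_{\mathcal H}+\frac{1}{1-q}\max_{0\le m<M}\|\mathcal S_{\Delta t}Y_m-\mathcal S Y_m\|_{\mathcal H}.
\]
This needs no contractivity of the discretized map. However, it carries a consistency error along the iterates that does not vanish as $M\to\infty$. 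So neither \eqref{eq:2.21} nor the depth bound \eqref{eq:2.23} follows as stated.
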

\begin{proof}
Let $\mathcal S_{\Delta t}$ denote the discretized stochastic operator 
obtained by approximating the integrals in \eqref{eq:2} via quadrature and 
Brownian increments:
\begin{equation}
(\mathcal S_{\Delta t}Y)(t)
=
g(t)
+
\sum_{j=1}^{N}
\Big(
K(t,t_j)\Delta t
+
G(t,t_j)\Delta W_j
\Big)Y(t_j).
\label{eq:2.24}
\end{equation}
and the associated neural network iteration is
$Y_{m+1}=\mathcal S_{\Delta t}(Y_m).$ Define the discretization error
$E_{\Delta t}(Y)
=\mathcal S(Y)-\mathcal S_{\Delta t}(Y).$
Using \eqref{eq:2.24}. Using the mean-square convergence results for stochastic quadrature and Euler–Maruyama discretization, we have

\[
\|E_{\Delta t}(g)\|_{\mathcal H}
\le
C_{\Delta t}.
\]
Now, \[
Y_{m+1}-Y^\ast
=
\mathcal S_{\Delta t}(Y_m)-\mathcal S(Y^\ast)=\big(S_{\Delta t}(Y_m)-\mathcal S(Y_m)\big)+\big(S(Y_m)-\mathcal S(Y^\ast)\big)
\]
Using the $\mathcal H$-norm and applying the triangle inequality yields
\begin{equation}
    \|Y_{m+1}-Y^\ast\|_{\mathcal H}
\le
\|\mathcal S_{\Delta t}(Y_m)-\mathcal S(X_m)\|_{\mathcal H}
+
\|\mathcal S(X_m)-\mathcal S(X^\ast)\|_{\mathcal H}.
\label{eq:2.25}
\end{equation}
Using the contraction property of $\mathcal S$,

\[
\|\mathcal S(X_m)-\mathcal S(X^\ast)\|_{\mathcal H}
\le
q\|Y_m-X^\ast\|_{\mathcal H}.
\]

Furthermore, the discretization term satisfies

\[
\|\mathcal S_{\Delta t}(Y_m)-\mathcal S(Y_m)\|_{\mathcal H}
\le
C_{\Delta t}+\|\mathcal S g-g\|_{\mathcal H}.
\]
Therefore, from \eqref{eq:2.25}, we have 

\[
\|Y_{m+1}-Y^\ast\|_{\mathcal H}
\le
q\|Y_m-X^\ast\|_{\mathcal H}
+
C_{\Delta t}
+
\|\mathcal S g-g\|_{\mathcal H}.
\]
\[
\implies\|Y_M-Y^\ast\|_{\mathcal H}
\le
q^M\|Y_0-Y^\ast\|_{\mathcal H}
+
\sum_{k=0}^{M-1} q^k
\left(
C_{\Delta t}
+
\|\mathcal S g-g\|_{\mathcal H}
\right).
\]
From the above, we have

\[
\|Y_M-X^\ast\|_{\mathcal H}
\le
\frac{q^M}{1-q}
\left(
C_{\Delta t}
+
\|\mathcal S g-g\|_{\mathcal H}
\right).
\]
Choosing  $M$ such that
\[
\frac{q^M}{1-q}
\left(
C_{\Delta t}
+
\|\mathcal S g-g\|_{\mathcal H}
\right)
\le
\varepsilon.
\]
Solving for $M$ yields

\[
M
\ge
\frac{
\ln\!\left(
\dfrac{\varepsilon(1-q)}
{C_{\Delta t}+\|\mathcal S g-g\|_{\mathcal H}}
\right)
}{\ln q}.
\]
\end{proof}
All the above was regarding linear stochastic Fredholm integral equation and Fredholm neural network. In the next section we will discuss the nonlinear stochastic Fredholm integral equation and the associated nonlinear deep nonlinear stochastic Fredholm neural network (NSFNN). 
\subsection{Nonlinear Stochastic Fredholm Integral Equation(NSFIE)}
Here, we will discuss the NSFIE and will construct the associated neural operator based deep nonlinear stochastic Fredholm neural network (NSFNN). Consider the NSFIE 

\begin{equation}
\mathcal S(Y)=f(t)+\int_0^T K(t,s)\Phi(Y(s))ds
+\int_0^T G(t,s)Y(s)dW(s),
\end{equation}

where $f \in L^2([0,T])$, 
$K(t,s)$ is a deterministic Fredholm kernel, $G(t,s)$ is a stochastic kernel with $K, G \in L^\infty([0,T]^2)$, $W(t)$ denotes Brownian motion, $\Phi: \mathbb{R} \to \mathbb{R}$ is Lipschitz continuous:
\[
|\Phi(x) - \Phi(y)| \le L_\Phi |x-y|.
\] Cosider the Deep Stochastic Fredholm Neural Network (DSFNN)
\begin{equation}
Y^{(k+1)} = \mathcal{S}(Y^{(k)}),
\end{equation}
where 
\begin{equation}
Y^{(k+1)}(t)=f(t)
+\int_0^T K(t,s)\Phi(Y^{(k)}(s))ds
+\int_0^T G(t,s)Y^{(k)}(s)dW(s).
\end{equation}

\begin{proposition}
Let $\mathcal{S}$ denote the stochastic Fredholm operator

\begin{equation}
\mathcal{S}(Y)=f + K\Phi(Y) + G(Y\odot dW).
\end{equation}

Assume the activation function $\Phi$ is Lipschitz continuous with constant $L_\Phi$. If $
\|K\|L_\Phi + \|V\|L_\Phi < 1,$
then the operator $\mathcal{S}$ is a contraction in mean square and the sequence
$
Y^{(k)} \rightarrow Y
$ converges to the unique solution of the stochastic integral equation.
\end{proposition}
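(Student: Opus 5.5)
The plan is to show that $\mathcal{S}$ is a mean-square contraction on $\mathcal{H}=L^2(\Omega;L^2([0,T]))$ and then apply the stochastic Banach fixed-point theorem recalled after \eqref{eq:5}. Here $\|V\|$ is read as the norm of the stochastic kernel $G$: I take $\|K\|$ and $\|V\|$ to be the Hilbert--Schmidt norms $\big(\int_0^T\!\int_0^T |K(t,s)|^2\,ds\,dt\big)^{1/2}$ and $\big(\int_0^T\!\int_0^T |G(t,s)|^2\,ds\,dt\big)^{1/2}$. As stated, the hypothesis multiplies $\|V\|$ by $L_\Phi$, but the stochastic term is linear in $Y$. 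I will therefore aim to prove the contraction constant
\[
q=\|K\|L_\Phi+\|V\| ,
\]
and note that it coincides with the stated one when $L_\Phi\ge 1$; the stated hypothesis then implies $q<1$. For $L_\Phi<1$ I will flag that the hypothesis should read $\|K\|L_\Phi+\|V\|<1$. Throughout, adaptedness is needed for the Itô integral, so I restrict to the closed subspace of $\mathcal{H}$ of progressively measurable processes (with $G(t,\cdot)$ deterministic) and check that $\mathcal{S}$ maps it into itself.

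\textbf{Step 1 (splitting).} For $Y_1,Y_2$ in this subspace, write
\[
\mathcal{S}Y_1-\mathcal{S}Y_2=\int_0^T K(t,s)\big(\Phi(Y_1(s))-\Phi(Y_2(s))\big)ds+\int_0^T G(t,s)\big(Y_1(s)-Y_2(s)\big)dW_s .
\]
Minkowski's inequality in $\mathcal{H}$ bounds $\|\mathcal{S}Y_1-\mathcal{S}Y_2\|_{\mathcal{H}}$ by the sum of the $\mathcal{H}$-norms of the two terms. Using Minkowski rather than Young's inequality avoids picking up a factor $\sqrt2$.

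\textbf{Step 2 (deterministic term).} For each fixed $t$ and $\omega$, apply Cauchy--Schwarz in $s$:
\[
\Big|\int_0^T K(t,s)\big(\Phi(Y_1)-\Phi(Y_2)\big)ds\Big|^2\le \int_0^T|K(t,s)|^2ds\; L_\Phi^2\int_0^T|Y_1-Y_2|^2ds .
\]
Integrating in $t$ and taking expectations gives the bound $\|K\|L_\Phi\|Y_1-Y_2\|_{\mathcal{H}}$.

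\textbf{Step 3 (stochastic term, the main obstacle).} For fixed $t$, the Itô isometry gives
\[
\mathbb{E}\Big|\int_0^T G(t,s)(Y_1-Y_2)(s)\,dW_s\Big|^2=\mathbb{E}\int_0^T |G(t,s)|^2|Y_1-Y_2|^2ds .
\]
Integrating in $t$ and using Fubini yields $\mathbb{E}\int_0^T g(s)|Y_1-Y_2|^2ds$ with $g(s)=\int_0^T|G(t,s)|^2dt$. Closing this as a bound by a constant times $\|Y_1-Y_2\|_{\mathcal{H}}^2$ requires a sup over $s$ rather than a Hilbert--Schmidt norm. This is where the precise meaning of $\|V\|$ matters. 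I would define $\|V\|:=\operatorname{ess\,sup}_s\big(\int_0^T|G(t,s)|^2dt\big)^{1/2}$, which is finite since $G\in L^\infty$ and is at most $\sqrt{T}\|G\|_\infty$. This gives the bound $\|V\|\,\|Y_1-Y_2\|_{\mathcal{H}}$. I also need to check measurability in $t$ of the Itô integral, which follows because $G$ is deterministic, so the parametrised integral admits a jointly measurable version.

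\textbf{Step 4 (conclusion).} Combining Steps 2 and 3 gives $\|\mathcal{S}Y_1-\mathcal{S}Y_2\|_{\mathcal{H}}\le q\|Y_1-Y_2\|_{\mathcal{H}}$ with $q<1$. The same estimates, applied with $Y_2=0$ and using $|\Phi(x)|\le|\Phi(0)|+L_\Phi|x|$, show that $\mathcal{S}$ maps $\mathcal{H}$ into itself. Banach's fixed-point theorem then gives a unique fixed point $Y$. The Picard iterates $Y^{(k)}$ satisfy $\|Y^{(k)}-Y\|_{\mathcal{H}}\le q^k\|Y^{(0)}-Y\|_{\mathcal{H}}$, exactly as in \eqref{eq:2.16}, so they converge in mean square to $Y$.
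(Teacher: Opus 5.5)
\textbf{The gap: $\mathcal{S}$ does not preserve adaptedness.} You announce that you will check that $\mathcal{S}$ maps the subspace of progressively measurable processes into itself, but you never do, and for a genuinely Fredholm operator it is false. At time $t$, both $\int_0^T K(t,s)\Phi(Y(s))\,ds$ and $\int_0^T G(t,s)Y(s)\,dW_s$ depend on $Y(s)$ and on the Brownian path for $s>t$. So $(\mathcal{S}Y)(t)$ is $\mathcal{F}_T$-measurable but in general not $\mathcal{F}_t$-measurable. Already for $Y\equiv 1$ and $G\equiv 1$ one gets $(\mathcal{S}Y)(t)=f(t)+\Phi(1)\int_0^T K(t,s)\,ds+W_T$. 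Starting from $Y^{(0)}=f$, the first iterate $Y^{(1)}$ is therefore anticipating in general. The second iterate then contains an It\^o integral of a non-adapted integrand, which is undefined, and the It\^o isometry in your Step 3 is no longer available. Banach's theorem cannot be run on the adapted subspace, because that subspace is not invariant; on all of $\mathcal{H}$ the stochastic term is not an It\^o integral at all. Your Step 4 only checks square-integrability, not adaptedness.

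The paper's proof has the same defect without acknowledging it: it applies the It\^o isometry to arbitrary $Y_1,Y_2\in\mathcal{H}$. So it offers no repair. To close the gap you need one of two things. You can impose Volterra structure, $K(t,s)=G(t,s)=0$ for $s>t$; then the adapted subspace is invariant and your Steps 1--4 go through verbatim with the same $q$. Or you can reinterpret the stochastic term as a Skorokhod integral. There the isometry acquires Malliavin-derivative correction terms, and a contraction would have to be proved in a Malliavin--Sobolev norm, which is a substantially different argument.

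\textbf{What is right, compared with the paper.} Apart from this, your route is the paper's route: the same splitting, Cauchy--Schwarz with the Lipschitz bound on the Lebesgue term, the It\^o isometry on the stochastic term, then Banach. You improve on it in three ways.

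\emph{Sharper constant.} The paper combines the two terms via $(a+b)^2\le 2a^2+2b^2$ and ends with the constant $\sqrt2\,(L_\Phi\|K\|+\|G\|)$. Its proof therefore yields contraction only under $\sqrt2\,(L_\Phi\|K\|+\|G\|)<1$, which differs from the stated hypothesis and is not implied by it. Your use of Minkowski gives the sharper $q=\|K\|L_\Phi+\|V\|$.

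\emph{The stated hypothesis.} Your warning that $\|K\|L_\Phi+\|V\|L_\Phi<1$ does not force contraction when $L_\Phi<1$ is correct. Take $K=0$, $G$ constant with $\|V\|>1$, and $L_\Phi<1/\|V\|$. Then the Lipschitz constant of $\mathcal{S}$ on adapted processes is exactly $\|V\|>1$.

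\emph{The right norm for $G$.} Your Step 3 observation that the It\^o term is controlled by $\operatorname{ess\,sup}_s\bigl(\int_0^T|G(t,s)|^2\,dt\bigr)^{1/2}$, and not by a Hilbert--Schmidt norm, is also correct. This is precisely the operator norm of that term on adapted processes. Delete the Hilbert--Schmidt definition of $\|V\|$ from your opening paragraph, since it contradicts this.
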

\begin{proof}
   Let $Y_1, Y_2 \in \mathcal{H}$ and consider
\[
S(Y_1)(t) - S(Y_2)(t)
\]
\[
= \int_0^T K(t,s)\left[\Phi(Y_1(s)) - \Phi(Y_2(s))\right] ds
+ \int_0^T G(t,s)\left[Y_1(s) - Y_2(s)\right] dW(s).
\] 
Let
\[
I_1(t) = \int_0^T K(t,s)\left[\Phi(Y_1(s)) - \Phi(Y_2(s))\right] ds,\;
I_2(t) = \int_0^T G(t,s)\Delta Y(s)\, dW(s)
\]
where $\Delta Y = Y_1 - Y_2$ and $S(Y_i)(t)=I_i(t)$ for $i=1,2$.
Using Lipschitz continuity we have 
\[
|I_1(t)| \le L_\Phi \int_0^T |K(t,s)| |\Delta Y(s)| ds.
\]
Next, using Cauchy–Schwarz inequality and 
Integrating over $t$ and taking the expectation, we will get
\[
\mathbb{E}\int_0^T |I_1(t)|^2 dt
\le L_\Phi^2 \|K\|^2 \mathbb{E}\int_0^T |\Delta Y(s)|^2 ds.
\]
Using Itô isometry and integrating over $T$, we have
\[
\mathbb{E}\int_0^T |I_2(t)|^2 dt
\le \|G\|^2 \mathbb{E}\int_0^T |\Delta Y(s)|^2 ds.
\]
Now,
\[
\|S(Y_1) - S(Y_2)\|
\le \sqrt{2}\left( L_\Phi \|K\| + \|G\| \right)\|Y_1 - Y_2\|.
\]
If $
\sqrt{2}\left( L_\Phi \|K\| + \|G\| \right) < 1,$
then $S$ is a contraction.
By Banach fixed point theorem, $S$ admits a unique fixed point $Y$
and the iteration $Y^{(k+1)} = S(Y^{(k)})$ converges in mean square. Hence,
\[
Y^{(k)} \to Y \quad \text{in } L^2(\Omega \times [0,T]).
\]
\end{proof}
\begin{algorithm}[H]
\caption{Deep Stochastic Fredholm Neural Network}
\begin{enumerate}
\item Discretize the time grid $\{t_i\}_{i=1}^N$.
\item Construct kernel matrices
\[
K_{ij}=K(t_i,t_j)\Delta t, \quad 
V_{ij}=V(t_i,t_j)\Delta t.
\]
\item Simulate Brownian increments
\[
\Delta W_i \sim \mathcal{N}(0,\Delta t).
\]
\item Initialize
\[
X^0 = f.
\]
\item For $k=0,\dots,K$
\[
X^{k+1} = f + K\Phi(X^k) + G(X^k\odot \Delta W)
\]
\item Stop when
\[
\|X^{k+1}-X^k\| < \varepsilon .
\]
\end{enumerate}
\end{algorithm}
The computational details of nonlinear SFNN are given in the algorithm 3.1. Next, we further gneralize the NSFNN to the nonlinear Volterra-Fredholm Stochastic Model. 

\subsection{Volterra--Fredholm Stochastic Model}
We consider the stochastic Volterra--Fredholm equation:
\begin{equation}
Y(t) = f(t) + \int_0^t V(t,s)\Phi(Y(s))\,ds 
+ \int_0^T K(t,s)\Phi(Y(s))\,ds 
+ \int_0^t G(t,s)Y(s)\,dW(s),\label{3.5}
\end{equation}
with $V(t,s)$ is the Volterra kernel (memory effect), $K(t,s)$ is the Fredholm kernel (global interaction),
$G(t,s)$ is the stochastic kernel,
$\Phi$ is a nonlinear function. Define operator $S$:
\begin{equation}
(SY)(t) = f(t) + \int_0^t V(t,s)\Phi(Y(s))ds 
+ \int_0^T K(t,s)\Phi(Y(s))ds 
+ \int_0^t G(t,s)Y(s)dW(s). \label{3.6}
\end{equation}
\begin{theorem}
    Let $\Phi$ be Lipschitz and the kernel of $\Phi$ satisfy
$\|V\| + \|K\| + \|G\| < \infty$ and $ L(\|V\| + \|K\|) + \|G\| < 1,$
then the equation \eqref{3.6} admits a unique solution in $H$.
\end{theorem}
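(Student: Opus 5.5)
We will show that the operator $S$ in \eqref{3.6} is a mean-square contraction on $\mathcal H=L^2(\Omega;L^2([0,T]))$ and then apply the Banach fixed-point theorem, as in the proof of the preceding proposition for the purely Fredholm case. To make the Itô integral meaningful we work on the closed subspace of $\mathcal H$ consisting of $\{\mathcal F_t\}$-adapted processes. The norms are fixed as Hilbert--Schmidt type bounds:
\[
\|V\|^2=\sup_{t}\int_0^T|V(t,s)|^2ds \cdot T,
\]
and analogously for $\|K\|$ and $\|G\|$. Each of these is finite by the hypothesis $\|V\|+\|K\|+\|G\|<\infty$. The first step is to check that $S$ maps this subspace into itself. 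The drift terms are bounded in $L^2$ by Cauchy--Schwarz together with the linear growth $|\Phi(x)|\le|\Phi(0)|+L|x|$. The stochastic term lies in $L^2$ by the Itô isometry, and it is adapted because its upper limit is $t$.

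For $Y_1,Y_2$ with $\Delta Y=Y_1-Y_2$ we split
\[
SY_1-SY_2=I_V+I_K+I_G.
\]
The three estimates are as follows.
\begin{itemize}
\item For the Volterra term, Lipschitz continuity and Cauchy--Schwarz give $\mathbb E\int_0^T|I_V|^2dt\le L^2\|V\|^2\|\Delta Y\|_{\mathcal H}^2$.
\item The Fredholm term satisfies the same bound with $K$ in place of $V$.
\item For the Itô term, the isometry gives $\mathbb E|I_G(t)|^2=\mathbb E\int_0^t|G(t,s)|^2|\Delta Y(s)|^2ds$. Integrating in $t$ and using the sup bound on $G$ yields $\|G\|^2\|\Delta Y\|^2_{\mathcal H}$.
\end{itemize}
Minkowski's inequality in $\mathcal H$ then gives
\[
\|SY_1-SY_2\|_{\mathcal H}\le \big(L(\|V\|+\|K\|)+\|G\|\big)\|\Delta Y\|_{\mathcal H}.
\]
By hypothesis the constant here is $q<1$. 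Banach's theorem then yields a unique fixed point in the adapted subspace, and the Picard iterates $Y^{(k+1)}=SY^{(k)}$ converge in mean square.

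The main obstacle is that the Fredholm term $\int_0^T K(t,s)\Phi(Y(s))ds$ depends on future values, so $SY$ is in general not adapted. The Itô integral $\int_0^tG(t,s)Y(s)dW(s)$, with $G$ depending on $t$ as well, is then not well defined in the classical sense and the isometry is unavailable. I would first try to restrict to the case where $K$ and $\Phi$ make the Fredholm part deterministic or adapted. If that fails, I would interpret the stochastic term as a Skorokhod integral, or alternatively require $G(t,s)Y(s)$ to be adapted in $s$ for each fixed $t$. Under the latter requirement the isometry estimate above is justified pointwise in $t$, since only $s\le t$ enters; adaptedness of the full output is not needed for the $L^2$ contraction. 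With this reading the contraction estimate and the fixed-point argument go through unchanged.

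A secondary point is the constant. The naive bound $(a+b+c)^2\le 3(a^2+b^2+c^2)$ would introduce a factor $\sqrt3$, as the factor $\sqrt2$ did in the preceding proposition. Using Minkowski's inequality in $\mathcal H$ directly, rather than squaring the sum, avoids this factor and matches the stated condition exactly.
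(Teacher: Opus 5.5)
Your contraction estimate is the paper's argument. The paper's proof consists only of the bound $\|SY_1-SY_2\|\le\big[L(\|V\|+\|K\|)+\|G\|\big]\|Y_1-Y_2\|$, obtained from kernel bounds and the It\^o isometry, followed by Banach's theorem on all of $H$. Your use of the triangle inequality in $\mathcal H$ instead of squaring is exactly what gives that constant with no $\sqrt3$.

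The genuine gap is adaptedness, which you correctly flag but do not resolve. Banach's theorem needs a complete space $X$ with $S(X)\subseteq X$, and on the adapted subspace this fails. For adapted $Y$, the Fredholm term $\int_0^T K(t,s)\Phi(Y(s))\,ds$ involves $Y(s)$ for $s>t$, so $SY$ is in general not adapted. Your opening claim that $S$ preserves the subspace checks only the It\^o term. The next Picard step then needs an It\^o integral of an anticipating integrand.

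Your reading (c) changes nothing. $G$ is a deterministic kernel, so ``$G(t,s)Y(s)$ adapted in $s$'' is just adaptedness of $Y$ on the support of $G(t,\cdot)$, and that is exactly what $S$ destroys. A one-step Lipschitz estimate valid for adapted inputs says nothing about iterates that leave that class.

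The Skorokhod reading does not let the estimate go through unchanged either. There $\mathbb E|\delta(u)|^2=\mathbb E\int_0^T|u_s|^2\,ds+\mathbb E\int_0^T\!\int_0^T D_su_r\,D_ru_s\,ds\,dr$. Controlling the trace term requires bounds on the Malliavin derivatives of the iterates, which neither $\mathcal H$ nor the hypotheses provide.

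So your argument proves the theorem only under an extra non-anticipation assumption. Two examples: $K(t,s)=0$ for $s>t$, in which case $S$ maps the closed adapted subspace into itself; or $G\equiv0$, in which case no It\^o integral appears and Banach applies on all of $\mathcal H$. The paper's own proof never confronts this; it applies the It\^o isometry to arbitrary $Y_1,Y_2\in H=L^2(\Omega;L^2(D))$. You have located a real hole in the paper's argument, but your repair does not close it.

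A smaller slip concerns the norm of $G$. If $\|G\|$ is defined ``analogously'' to $\|V\|$, i.e.\ $\|G\|^2=T\sup_t\int_0^T|G(t,s)|^2\,ds$, the It\^o bound does not follow. After the isometry and Fubini you get $\int_0^T\mathbb E|\Delta Y(s)|^2\big(\int_s^T|G(t,s)|^2\,dt\big)\,ds$. Here the kernel multiplies $\mathbb E|\Delta Y(s)|^2$ pointwise in $s$, so it cannot be split off by Cauchy--Schwarz as in the drift terms. You need a norm with a supremum in $s$, such as $\|G\|^2=\sup_s\int_s^T|G(t,s)|^2\,dt$ or $T\sup_{t,s}|G(t,s)|^2$. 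The latter is consistent with the $L^\infty$ assumption the paper places on $G$ in the preceding subsection.
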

\begin{proof}
Let $Y_1, Y_2 \in H$. Then
\[
S(Y_1) - S(Y_2)
\]
\[
= \int_0^t V(t,s)[\Phi(Y_1)-\Phi(Y_2)]ds 
+ \int_0^T K(t,s)[\Phi(Y_1)-\Phi(X_2)]ds 
+ \int_0^t G(t,s)(Y_1-Y_2)dW(s).
\]
Using the expectation and norm and kernel bounds $C_1$ and $C_2$ and It\^o isometry, we have
\[
\|S(Y_1) - S(Y_2)\|
\le \big[ L(\|V\| + \|K\|) + \|G\| \big] \|Y_1 - Y_2\|\le \|Y_1 - Y_2\|,
\]
Since $H$ is complete and $S$ is a contraction, there exists a unique $X \in H$ such that
\[
S(Y) = Y.
\]
\end{proof}
\begin{theorem}
Let $Y_\theta(t)$ be a neural network approximation of $Y(t)$. Define the residual and loss function respectively as
\[
R_\theta(t) = Y_\theta(t) - f(t) - \int_0^t V(t,s)\Phi_s\,ds
\;-\; \int_0^t K(t,s)\Phi_s\,ds
\;-\; \int_0^t G(t,s)Y_\theta(s)\,dW(s),
\]
$\mathcal{L}(\theta) = \mathbb{E}\!\left[\displaystyle\int_0^T |R_\theta(t)|^2\,dt\right]$,
 with $\Phi_s = \Phi(Y_\theta(s))$. \quad
Then we have
\begin{enumerate}
  \item If $\mathcal{L}(\theta) = 0$, then $Y_\theta(t) = Y(t) \ \mathbb{P}\text{-a.s.}.$
  \item If $\mathcal{L}(\theta) \to 0$, then $Y_\theta \to Y$ in $L^2(\Omega \times [0,T])$.
\end{enumerate}
\end{theorem}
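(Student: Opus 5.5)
The plan is to write the residual as $R_\theta = Y_\theta - S(Y_\theta)$, where $S$ is the Volterra--Fredholm operator \eqref{3.6}, and then use the contraction established in the preceding theorem. One caveat: the residual in the statement has $\int_0^t K(t,s)\Phi_s\,ds$, whereas \eqref{3.6} has $\int_0^T$. I will read the residual with the Fredholm range $[0,T]$, so that $R_\theta = Y_\theta - SY_\theta$ exactly. If instead the $[0,t]$ version is meant, the same argument goes through with $S$ replaced by the corresponding Volterra-type operator, which satisfies the same Lipschitz bound. Let $Y$ be the unique fixed point $Y=SY$ given by the previous theorem, and write $\rho := L(\|V\|+\|K\|)+\|G\| < 1$. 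Throughout I assume $Y_\theta\in H$, that it is adapted so the It\^o integral is defined, and that $\mathcal L(\theta)=\|R_\theta\|_H^2$.

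The key estimate is an a posteriori bound. Since $Y = SY$,
\[
Y_\theta - Y = (Y_\theta - SY_\theta) + (SY_\theta - SY) = R_\theta + (SY_\theta - SY).
\]
Taking the $H$-norm, using the triangle inequality, and applying the contraction estimate from the previous proof gives
\[
\|Y_\theta - Y\|_H \le \|R_\theta\|_H + \rho\,\|Y_\theta - Y\|_H .
\]
Rearranging yields
\[
\|Y_\theta - Y\|_H \le \frac{1}{1-\rho}\,\mathcal L(\theta)^{1/2}.
\]
The contraction estimate relies on Lipschitz continuity of $\Phi$ for the $V$- and $K$-terms and on It\^o isometry for the $G$-term.

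Both claims follow from this bound. For (2): if $\mathcal L(\theta)\to 0$, then $\|Y_\theta - Y\|_H\to 0$, which is convergence in $L^2(\Omega\times[0,T])$. For (1): if $\mathcal L(\theta)=0$, then $\mathbb E\int_0^T|Y_\theta-Y|^2dt=0$, so $Y_\theta=Y$ for $dt\otimes d\mathbb P$-almost every $(t,\omega)$. This gives equality as elements of $H$, and hence $\mathbb P$-a.s. for a.e. $t$. If both processes have continuous paths, the equality upgrades to all $t$ simultaneously, $\mathbb P$-a.s.

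The main obstacle is making the contraction step rigorous. The previous proof only sketches it, and an honest computation via $(a+b+c)^2\le 3(a^2+b^2+c^2)$ produces a constant such as $\sqrt3\,(L\|V\|+L\|K\|+\|G\|)$, as in the $\sqrt2$ factor of the nonlinear proposition. I will therefore state the hypothesis as ``$S$ is a mean-square contraction with constant $\rho<1$'' and take $\rho$ from that estimate, keeping track of Cauchy--Schwarz with $\|K\|$ as the $L^2([0,T]^2)$ norm and It\^o isometry with $\|G\|$ as $\sup_t\|G(t,\cdot)\|_{L^2}$. Only the strict inequality $\rho<1$ is used, so the conclusion holds whatever the exact constant is.
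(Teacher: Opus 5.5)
Your argument is correct for the reading that matches the literal statement, where all three integrals run over $[0,t]$, given the standing contraction hypothesis. It is not the paper's route, though.

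\textbf{The paper's route.} The paper never invokes a contraction. It writes $e_\theta=Y_\theta-Y$ as $R_\theta$ plus the $V$-, $K$- and $G$-differences, all over $[0,t]$. Cauchy--Schwarz, the Lipschitz bound and It\^o isometry then give $\mathbb{E}|e_\theta(t)|^2\le C\,\mathbb{E}|R_\theta(t)|^2+C\int_0^t\mathbb{E}|e_\theta(s)|^2\,ds$. Gr\"onwall closes this and yields $\|e_\theta\|^2\le Ce^{CT}\mathcal{L}(\theta)$. Part (1) is handled separately, by appeal to uniqueness of the solution.

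\textbf{What each route buys.} The Gr\"onwall route exploits causality, so it needs no smallness condition on $L$ and the kernels; yours does. In exchange, your a posteriori bound $\|Y_\theta-Y\|_H\le(1-\rho)^{-1}\mathcal{L}(\theta)^{1/2}$ is abstract and gives (1) and (2) in one stroke. Your $dt\otimes d\mathbb{P}$-a.e.\ formulation of (1) is also more accurate than the paper's. Moreover, a contraction-type argument is what a genuinely Fredholm term would require, since Gr\"onwall cannot close an inequality containing $\int_0^T\mathbb{E}|e_\theta|^2$.

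\textbf{Dropping $\rho<1$.} In the Volterra reading you can remove the smallness hypothesis while keeping your structure. For bounded kernels, use the equivalent norm $\big(\mathbb{E}\int_0^T e^{-\lambda t}|Y(t)|^2\,dt\big)^{1/2}$. In it the operator has Lipschitz constant $\rho_\lambda=O(\lambda^{-1/2})$. Your bound then becomes $\|Y_\theta-Y\|_H\le e^{\lambda T/2}(1-\rho_\lambda)^{-1}\mathcal{L}(\theta)^{1/2}$, which is the Gr\"onwall estimate in another guise.

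\textbf{Caution on the $\int_0^T K$ reading.} Your preferred reading does not sit well in the It\^o framework. For adapted $Y$, the term $\int_0^T K(t,s)\Phi(Y(s))\,ds$ is not $\mathcal{F}_t$-measurable, so $S$ does not preserve adaptedness. The fixed point $Y$ is then anticipating, and $\int_0^t G(t,s)Y(s)\,dW(s)$ is no longer an It\^o integral. Consequently the It\^o isometry you need to bound $SY_\theta-SY$ is unavailable. The preceding theorem glosses over this. So it is your fallback $[0,t]$ version, the one the paper's proof actually works with, that stands.

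\textbf{The $\sqrt3$ is avoidable.} Applying Minkowski's inequality in $H$ to the three terms gives the constant $L\|V\|+L\|K\|+\|G\|$ directly. This requires $\|G\|:=\sup_s\|G(\cdot,s)\|_{L^2}$, because after It\^o isometry the $H$-norm integrates $G(t,s)^2$ over $t\in[s,T]$. Your $\sup_t\|G(t,\cdot)\|_{L^2}$ is the wrong quantity; it is harmless here only because $G\in L^\infty$.
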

 
\begin{proof}
(i) Given $\mathcal{L}(\theta) = 0$ and we have to show $Y_\theta(t) = Y(t)$ a.s.
 Since $|R_\theta(t)|^2 \geq 0$ is measurable and non-negative, so
\[
\mathcal{L}(\theta) = \mathbb{E}\!\left[\int_0^T |R_\theta(t)|^2\,dt\right] = 0
\]
Hence
\[
R_\theta(t) = 0 \quad \text{for a.e. } t \in [0,T].
\]
 
Since $Y_\theta(t)$ satisfies the governing equation so setting $R_\theta(t) = 0$ gives
\[
Y_\theta(t) = f(t)
  + \int_0^t V(t,s)\,\Phi(Y_\theta(s))\,ds
  + \int_0^t K(t,s)\,\Phi(Y_\theta(s))\,ds
  + \int_0^t G(t,s)\,Y_\theta(s)\,dW(s),
\]
which is exactly the stochastic integro-differential equation satisfied by $Y(t)$.
Under the assumptions Lipschitz continuity of $\Phi$, $V$, $K$, $G$ and square-integrability of the coefficients, the equation admits a unique solution in $L^2(\Omega \times [0,T])$ by a Picard fixed-point theorem. Since $Y_\theta(t)$ and $Y(t)$ satisfy the same equation with the same initial data,
\[
Y_\theta(t) = Y(t) \quad \mathbb{P}\text{-a.s.}
\]
 
\bigskip
\textbf{(ii)}
Let $e_\theta(t) = Y_\theta(t) - Y(t)$. Hence, 
\begin{align*}
e_\theta(t) &= R_\theta(t)
  + \int_0^t V(t,s)\bigl[\Phi(Y_\theta(s)) - \Phi(Y(s))\bigr]\,ds \\
  &\quad + \int_0^t K(t,s)\bigl[\Phi(Y_\theta(s)) - \Phi(Y(s))\bigr]\,ds
  + \int_0^t G(t,s)\,e_\theta(s)\,dW(s).
\end{align*}
Using the inequality $(a+b+c+d)^2 \leq 4(a^2+b^2+c^2+d^2)$, the Cauchy--Schwarz inequality, the Lipschitz condition $|\Phi(x)-\Phi(y)| \leq L|x-y|$, boundedness of the kernels $V$, $K$, $G$, and the {It\^{o} isometry} applied to the stochastic integral, we obtain
\[
\mathbb{E}|e_\theta(t)|^2
\;\leq\; C\,\mathbb{E}|R_\theta(t)|^2 + C\int_0^t \mathbb{E}|e_\theta(s)|^2\,ds,
\]
for some constant $C > 0$ depending on $L$, $T$, and the kernel bounds.
 
Integrating over $[0,T]$
\[
\int_0^T \mathbb{E}|e_\theta(t)|^2\,dt
\;\leq\; C\int_0^T \mathbb{E}|R_\theta(t)|^2\,dt
  + C\int_0^T\!\int_0^t \mathbb{E}|e_\theta(s)|^2\,ds\,dt.
\]
and applying {Gr\"{o}nwall's inequality} yields
\[
\|e_\theta\|^2_{L^2(\Omega\times[0,T])}
\;\leq\; C\,\mathcal{L}(\theta)\,e^{CT}.
\]
Since $\mathcal{L}(\theta) \to 0$, the right-hand side tends to zero, giving
\[
\|Y_\theta - Y\|_{L^2(\Omega\times[0,T])} \;\longrightarrow\; 0,
\]
which gives $Y_\theta \to Y$ in $L^2(\Omega \times [0,T])$.
\end{proof}
\section {Applications to Financial Integral Equations}

In this section we will discuss the application of SFIE, SFDNN, NSFIE, NSFDNN and Voltera stochastic deep neural networks. Also, we will discuss the acuaracy and the convergence of the solutions obtained through each method. Again, we will discuss the accuracy and usefulness of the constructed neural operators.  
\subsection {Solution of Black Schole's equation}
Consider an asset price $S_t$ following a geometric Brownian motion
\[
dS_t = rS_t\,dt + \sigma S_t\,dW_t,
\]
where $r$ is the risk-free rate, $\sigma$ the volatility and $W_t$ a
standard Brownian motion. Let $V(S)$ denote the price of a down-and-out barrier option.
Under risk-neutral valuation, $V(S)$ satisfies the Black-Schole's equation
\begin{equation}
\frac{1}{2}\sigma^2 S^2 V''(S) + rS V'(S) - rV(S) = 0,
\qquad H < S < K,
\label{eq:4.1}
\end{equation}
with boundary conditions
\begin{equation}
V(H) = 0, \qquad V(K) = K-H .
\label{eq:bvp_bc}
\end{equation}

The condition $V(H)=0$ reflects the barrier feature: if the asset price
touches the barrier $H$, the option becomes worthless. The condition
$V(K)=K-H$ represents the payoff at the strike level. Using the change of variable $x = \ln S, \quad \tau = T - t,$ we will reduce the Black Scole's equation to a simple parabolic PDE(heat equation). Using $V(S,t) = e^{-r\tau} u(x,\tau)$, $u(x,\tau) = e^{ax + b\tau} w(x,\tau),$ $a = -\frac{r - \frac{1}{2}\sigma^2}{\sigma^2}, \quad
b = -\frac{1}{2}\sigma^2 a^2 - a\left(r - \frac{1}{2}\sigma^2\right)$, equation \eqref{eq:4.1} reduces to the heat equation
\begin{equation}
\frac{\partial w}{\partial \tau}
= \frac{1}{2}\sigma^2 \frac{\partial^2 w}{\partial x^2}.
\end{equation}
The Green's function for the above is derived as 
\begin{equation}
G(x,\tau;\xi)
=
\frac{1}{\sqrt{2\pi\sigma^2 \tau}}
\exp\left(
-\frac{(x-\xi)^2}{2\sigma^2 \tau}
\right).
\end{equation}
Hence, 
\begin{equation}
w(x,\tau)
=
\int_{-\infty}^{\infty}
G(x,\tau;\xi)\, f(\xi)\, d\xi,
\end{equation}
where
$f(\xi) = e^{\alpha \xi}\max(e^\xi - K,0).$
Since $G(x,\tau;\xi)$ is a Gaussian density, we write $X_\tau = x + \sigma W_\tau,$
where $W_\tau$ is a Wiener process. Then $w(x,\tau)
=\mathbb{E}[f(X_\tau)].$
Next, define the stochastic kernel
\begin{equation}
K(x,\tau;\xi,\omega)
=
\frac{1}{\sqrt{2\pi\sigma^2 \tau}}
\exp\left(
-\frac{(x - \xi - \sigma W_\tau(\omega))^2}{2\sigma^2 \tau}
\right).
\end{equation}

Then
\begin{equation}
w(x,\tau,\omega)
=
\int_{-\infty}^{\infty}
K(x,\tau;\xi,\omega)\, f(\xi)\, d\xi.
\end{equation}
Next, the SFIE of second kind is obtained using $x = \ln S, \quad \xi = \ln y$ and is given by
\begin{equation}
V(S,t,\omega)
=
\max(S-K,0)
+
\lambda \int_0^\infty
K(S,t;y,\omega)\,
V(y,t,\omega)\, dy.
\end{equation}
where
\begin{align}
K(S,t;y,\omega)\nonumber
&=
\frac{1}{y\sigma\sqrt{2\pi (T-t)}} \\
&\quad \times
\exp\left(
-\frac{
\left(
\ln\frac{S}{y}
-
\left(r-\frac{\sigma^2}{2}\right)(T-t)
-
\sigma W_{T-t}(\omega)
\right)^2
}{2\sigma^2 (T-t)}
\right).
\end{align}
In the finite interval the SFIE can be rewritten as 
\begin{equation}
\nu(S) = f(S) + \int_H^K K(S,t)\nu(t)\,dt,
\label{eq:fredholm_finance}
\end{equation}
where
\[
f(S) = -\frac{2}{\sigma^2 S^2}
\left( rV(S) - rS V'(S) \right),
\]
and the kernel is given by
\[
K(S,t) =
\begin{cases}
t(K-S), & H \le t \le S, \\
S(K-t), & S \le t \le K .
\end{cases}
\]


Let $\{S_i\}_{i=1}^N$ be a discretization of the interval $[H,K]$ with
step size $\Delta S$. The Fredholm equation \eqref{eq:fredholm_finance}
can be approximated by
\[
\nu_i = f(S_i) + \sum_{j=1}^N K(S_i,S_j)\nu_j \Delta S .
\]

Following the Fredholm Neural Network framework, the weights and biases
of the network are defined as
\[
W_{ij} = K(S_i,S_j)\Delta S,
\qquad
b_i = f(S_i).
\]

The SFNN iteration is then given by
\[
\nu^{(k+1)} = b + W \nu^{(k)},
\]
which corresponds to a feedforward neural network with $k$ hidden layers.

Once $\nu(S)$ is obtained, the option price can be reconstructed via
\[
V(S) = \int_H^S \int_H^x \nu(t)\,dt\,dx + C_1 S + C_2,
\]
where the constants $C_1$ and $C_2$ are determined from the boundary
conditions \eqref{eq:bvp_bc}.

In Fig(1), the solution obtained through Black-Scholes, Green's integral, SFIE and SFNN are plotted. The Green's function integral achieves near-machine-precision accuracy
($\sim\!10^{-15}$) in pricing European call options, with the SFIE fixed-point
iteration.
The time-slice panel further validates the framework,
showing monotonic convergence of Black-Scholes price curves toward the intrinsic
payoff $\max(S - K,\, 0)$ as expiry approaches. By contrast, the SFNN Galerkin
projection onto a degree-$15$ polynomial basis incurs a global relative error of
$\approx 1.85 \times 10^{-3}$, roughly three orders of magnitude larger than the
quadrature-based methods.

\begin{figure}
    \centering
    \includegraphics[width=0.8\linewidth]{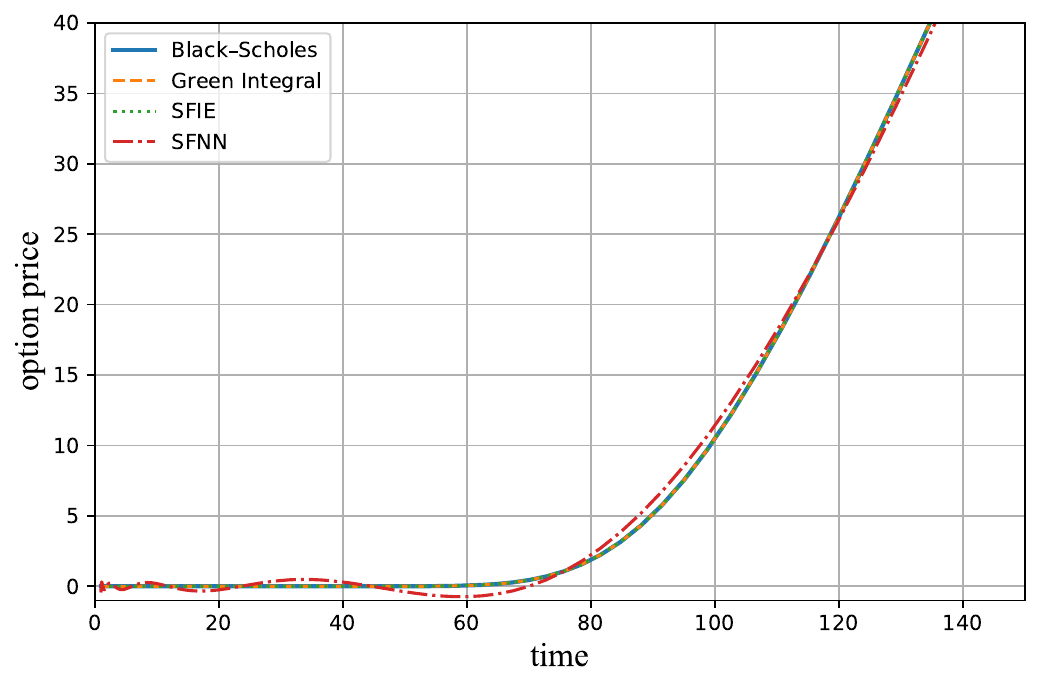}
    \caption{Comparision SFIE, SFNN and Exact}
    \label{fig:placeholder}
\end{figure}
Next, we will discuss another application of contagion dynamics in the bank networks.

\subsection{Neural operator approach of financial contagion dynamics in interbank networks}
Here, we will discuss another important class of applications called financial contagion. Financial contagion is the propagation of financial distress
across interconnected banks through interbank liabilities and
exposures. A widely studied framework for systemic risk analysis is
the Eisenberg--Noe model \cite{eisenberg2001systemic}, which describes how shocks propagate
through financial networks. Consider a financial system consisting of $N_b$ banks. Let $y_i(t)$
denote the distress level of bank $i$ at time $t$, where $0 \le y_i(t) \le 1.$ The dynamics of contagion can be modeled by the integral equation
\begin{equation}
y_i(t)
=
f_i(t)
+
\sum_{j=1}^{N_b}
a_{ij}
\int_0^t
K(t,s)y_j(s)\,ds ,
\label{eq:5.1}
\end{equation}
where $f_i(t)$ represents external financial shocks to bank $i$, $a_{ij}$ denotes the exposure of bank $i$ to bank $j$, $K(t,s)$ is a contagion kernel describing how past distress influences the present state. The matrix $A=(a_{ij})$ represents the interbank exposure network. Let's assume the exponential decaying kernel as
\begin{equation}
K(t,s)
=
\beta e^{-\gamma (t-s)}, \qquad s\le t,
\label{eq:5.2}
\end{equation}
where $\beta$ denotes the contagion strength, $\gamma$ represents the recovery or stabilization rate due to regulatory intervention or capital injections. Using \eqref{eq:5.1} and \eqref{eq:5.2}, we have
\begin{equation}
y_i(t)
=
f_i(t)
+
\sum_{j=1}^{N_b}
a_{ij}
\int_0^t
\beta e^{-\gamma (t-s)}y_j(s)\,ds .
\label{eq:5.4}
\end{equation}
Now, the Fredholm integral form of \eqref{eq:5.4} is given by 
\begin{equation}
y_i(t)
=
f_i(t)
+
\sum_{j=1}^{N_b}
\int_0^T
K_{ij}(t,s)y_j(s)\,ds ,
\label{eq:5.41}
\end{equation}
where
\[
K_{ij}(t,s)
=
\begin{cases}
a_{ij}\beta e^{-\gamma (t-s)}, & s \le t, \\
0, & s>t.
\end{cases}
\]
Let $t_1,\dots,t_N$ be a uniform discretization of the interval $[0,T]$ with step size $\Delta t$.
The integral equation \eqref{eq:5.41} becomes
\begin{equation}
y_{i,k}
=
f_i(t_k)
+
\sum_{j=1}^{N_b}
\sum_{l=1}^{N}
K_{ij}(t_k,t_l)y_{j,l}\Delta t .
\label{eq:5.5}
\end{equation}
Then the system can be written compactly as $y = b + W y,$ where the block matrix and the vector $b$ are given by $W_{(i,k),(j,l)}=K_{ij}(t_k,t_l)\Delta t,$ $b_{i,k} = f_i(t_k).$ Proceeding in a similar manner and using the initial guess $x^{(0)}$, the SFNN iteration is given by
\begin{equation}
y^{(k+1)} = b + W y^{(k)}, \qquad k=0,1,2,\dots
\label{eq:5.6}
\end{equation}
where $W$ and $b$ represent contagion propagation through interbank exposures and external financial shock respectively. Each iteration corresponds to a neural network layer,
and the fixed point of \eqref{eq:5.6} gives the equilibrium contagion level. The above iteration converges if the spectral radius of the kernel matrix satisfies $\rho(W) < 1.$

The left panel illustrates financial contagion dynamics across five interconnected
banks, each initialized at a distress level of $d = 0.10$ at $t = 0$, with all
trajectories decaying monotonically toward near-zero distress over the interval
$t \in [0, 10]$. Banks~1 through~4 exhibit tightly clustered decay profiles,
reflecting similar contagion exposure and balance-sheet coupling, whereas Bank~5
undergoes a markedly faster initial decline, suggesting a lower systemic
 interconnectedness or stronger absorptive capacity. The near-linear decay on the semilogarithmic scale
confirms exponential convergence of the SFNN iteration with respect to network
depth, validating the framework as a high-fidelity solver for contagion dynamics
with error comparable to direct quadrature methods.

\begin{figure}
    \centering
    \includegraphics[width=0.9\linewidth]{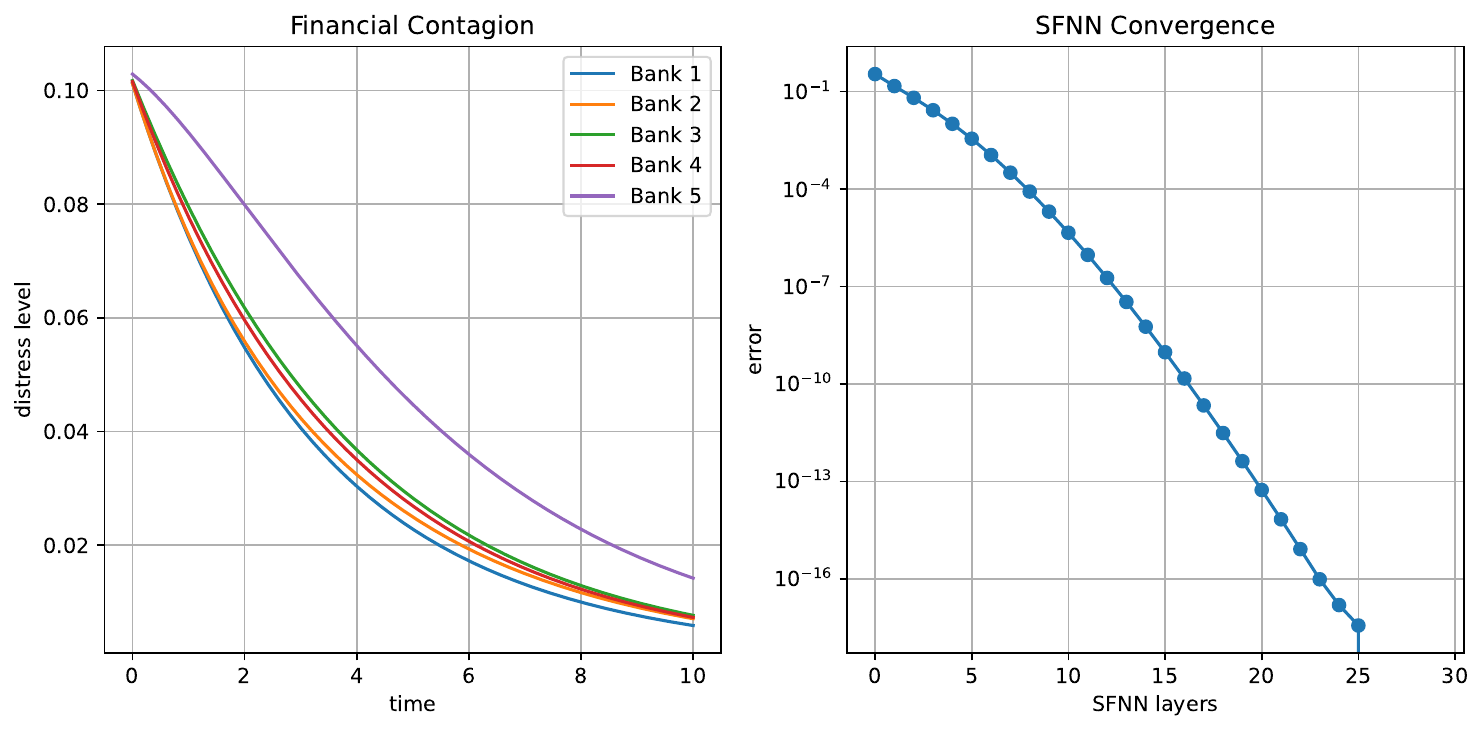}
    \caption{Financial contagion dynamics using SFNN and convergence}
    \label{fig:placeholder}
\end{figure}
Next, we will discuss another important application Merten jump discontinuity.
\subsection{Solution of Merten Jump Diffision Equation: A neural operator approach}
The Merton jump diffusion model \cite{merton1976option} describes the evolution of an asset price $S(t)$ as:

\begin{equation}
dS(t) = \mu S(t)\,dt + \sigma S(t)\,dW(t) + S(t^-)(J - 1)\,dN(t),\label{eq:6.1}
\end{equation}
where $\mu$ is the drift coefficient, $\sigma$ is the volatility, $W(t)$ is a standard Brownian motion, $N(t)$ is a Poisson process with intensity $\lambda$, $J$ is a random jump multiplier, $S(t^-)$ denotes the left limit of $S(t)$. Integrating both sides from $0$ to $t$, we obtain:
\begin{equation}
S(t) = S(0) + \int_0^t \mu S(s)\,ds + \int_0^t \sigma S(s)\,dW(s) + \int_0^t S(s^-)(J-1)\,dN(s).\label{eq:6.2}
\end{equation}
Using the compensated Poisson process:$\tilde{N}(t) = N(t) - \lambda t$ and $dN(t) = d\tilde{N}(t) + \lambda dt$ and substituting into the jump term we have
\begin{align}
\int_0^t S(s^-)(J-1)\,dN(s)
= \int_0^t S(s^-)(J-1)\,d\tilde{N}(s) 
\quad + \lambda \int_0^t S(s)\mathbb{E}[J-1]\,ds.\label{eq:6.3}
\end{align}
Using \eqref{eq:6.1} in \eqref{eq:6.2} and $\tilde{\mu} = \mu + \lambda \mathbb{E}[J-1]$, we have
\begin{equation}
S(t) = S(0) + \int_0^T K(t,s) S(s)\,ds
+ \int_0^t \sigma S(s)\,dW(s)
+ \int_0^t S(s^-)(J-1)\,d\tilde{N}(s).
\end{equation}
with
\begin{equation}
K(t,s) =
\begin{cases}
\tilde{\mu}, & 0 \le s \le t, \\
0, & s > t.
\end{cases}
\end{equation}
The stochastic Fredholm--Volterra neural network is constructed as a fixed-point iteration of the nonlinear stochastic operator. Define the iterative sequence $\{S_\theta^{(k)}(t)\}_{k\geq 0}$ by

\begin{align}
S_\theta^{(k+1)}(t)
&= S_0
+ \int_0^T K_\theta(t,s)\,S_\theta^{(k)}(s)\,ds \\
&\quad + \int_0^t \sigma S_\theta^{(k)}(s)\,dW(s)
+ \int_0^t S_\theta^{(k)}(s^-)(J-1)\,d\tilde{N}(s),
\end{align}

for $k = 0,1,2,\dots$, with an initial guess $S_\theta^{(0)}(t)$ (typically $S_\theta^{(0)}(t)=S_0$).
This defines a stochastic operator iteration
\begin{equation}
S_\theta^{(k+1)} = \mathcal{T}_\theta\big(S_\theta^{(k)}\big).
\end{equation}
Let $\{t_i\}_{i=0}^N$ be a uniform partition of $[0,T]$ with step size $\Delta t$. Then the discrete SFVNN iteration becomes

\begin{align}
S_i^{(k+1)} &= S_0
+ \sum_{j=0}^{N} K_\theta(t_i,t_j)\,S_j^{(k)}\,\Delta t \\
&\quad + \sum_{j=0}^{i-1} \sigma S_j^{(k)}\,\Delta W_j
+ \sum_{j=0}^{i-1} S_j^{(k)}(J_j - 1)\,\Delta \tilde{N}_j.
\end{align}
Here, The neural network is used exclusively to parameterize the Fredholm kernel $K_\theta(t,s)$ and the model enforces the intrinsic structure of the stochastic Fredholm--Volterra equation at every iteration.
Thus, the SFVNN is defined by:
\begin{equation}
S_\theta = \mathcal{T}_\theta(S_\theta),
\end{equation}
where $\mathcal{T}_\theta$ is a learned stochastic integral operator.

In Fig(3), the SFVNN fixed-point residual $\|S^{(k+1)} - S^{(k)}\| / \|S^{(k)}\|$ decays
monotonically from $\sim\!10^{-1}$ at the first outer iteration to
$\approx 10^{-13}$ by iteration $k = 12$, crossing the prescribed tolerance
$\varepsilon = 10^{-12}$ between iterations~11 and~12. The near-linear profile
on the semilogarithmic scale confirms superlinear, effectively geometric,
convergence of the fixed-point scheme, validating the contraction property of the
SFVNN operator across successive iterates.

In Fig(4), the Fredholm residual loss decays sharply from $\sim\!10^{-1}$ at the onset of stochastic gradient descent (SGD) training to a smoothed floor of $\approx 10^{-4}$ by step~12{,}000, with a
brief transient spike near step~2{,}000 marking the outer iteration boundary
where the neural network operator is re-initialized for the subsequent fixed-point
sweep. The width-$10$ moving average  confirms steady,
monotonically decreasing trend across all concatenated outer iterations, while the
per-step noise envelope remains bounded, demonstrating stable stochastic
convergence of the neural network operator toward the target Fredholm integral
kernel.

In Fig(5), the fixed-point residual decays exponentially across iterations
$k = 1, \ldots, 12$, dropping from $\sim\!10^{-2}$ to $\approx 10^{-13}$ and
confirming the geometric contraction of the SFVNN scheme, while the neural
network approximation error relative to the true fixed-point solution plateaus at
$\approx 10^{-2}$ after the first iteration and remains essentially flat
thereafter. This decoupling demonstrates that the FP residual is driven to
machine precision by the outer iteration, whereas the irreducible neural network operator
error is bounded below by the expressivity of the chosen network architecture,
constituting the dominant source of approximation error in the scheme.

\begin{figure}
    \centering
    \includegraphics[width=0.9\linewidth]{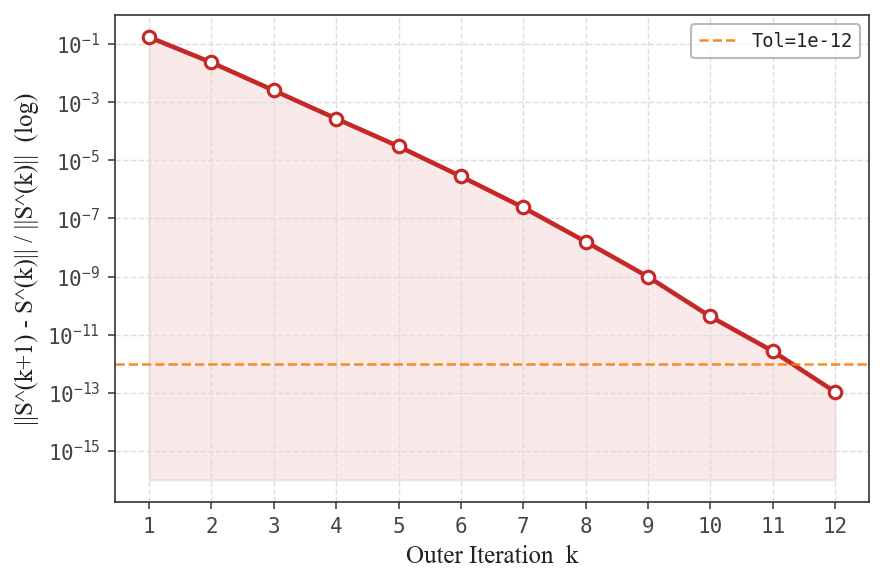}
    \caption{Stochastic Voltera Fredholm Neural Network residual}
    \label{fig3}
\end{figure}
\begin{figure}
    \centering
    \includegraphics[width=0.9\linewidth]{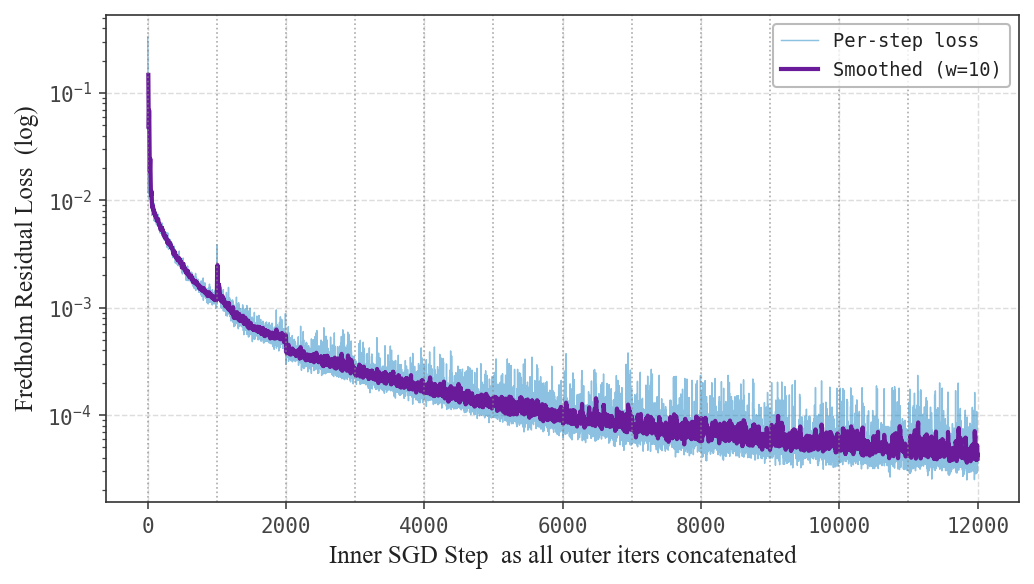}
    \caption{Neural Network Operator Training loss}
    \label{fig4}
\end{figure}
\begin{figure}
    \centering
    \includegraphics[width=0.9\linewidth]{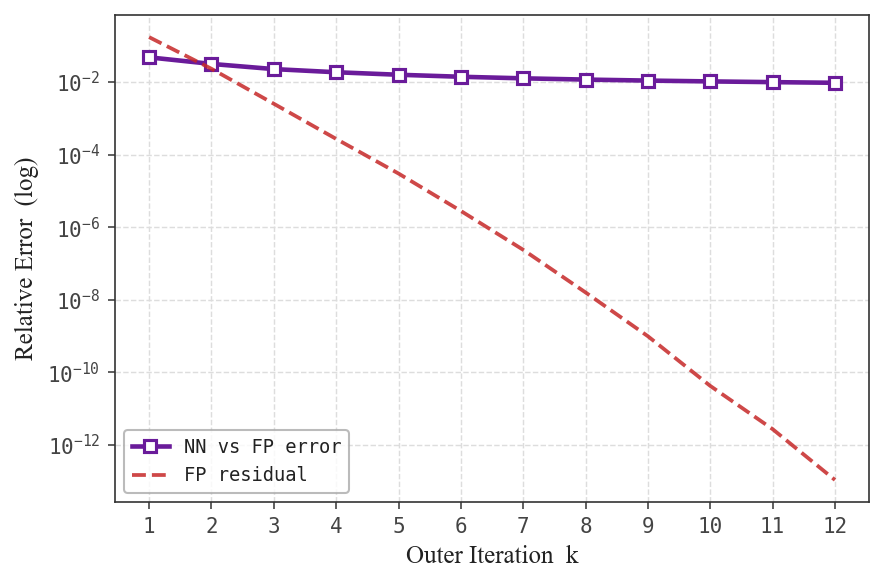}
    \caption{Neural Network Error Vs fixed point residual}
    \label{fig5}
\end{figure}
\section{Conclusion}
The neural operator-based explainable artificial intelligence is used to discuss the stochastic Fredholm integral equations (SFIEs) and stochastic
deep neural networks (SDNNs). Using the linearized framework, the connection between SFIE and SDNN is established. SFIE and SDNN are used to solve the Black-Scholes equation, the contagion dynamics of financial networks, and the Merten jump diffusion equation and we found that SDNN provides accurate solutions. SFIE and SFNN are used to solve the forward problems, whereas the inverse problem is complicated due to the instability nature of the associated hyperbolic partial differential equation and can be discussed as a separate problem. 

\section*{Acknowledgments}
This work is supported by the Department of Science and Technology, Government of India, under
Grant SR/FST/MS-II/2023/139(C)-VIT Vellore.
\biboptions{authoryear}
\bibliographystyle{elsarticle-harv}
\bibliography{references}


 \end{document}